\newtheorem{Theorem}{Theorem}[section]
\newtheorem{Lemma}[Theorem]{Lemma}
\newtheorem{Proposition}[Theorem]{Proposition}
\theoremstyle{definition}
\newtheorem{Definition}[Theorem]{Definition}
\newtheorem{Remark}[Theorem]{Remark}
\numberwithin{equation}{section}
\newcommand{\mR}{\mathbb{R}}                    
\newcommand{\mC}{\mathbb{C}}                    
\newcommand{\abs}[1]{\lvert #1 \rvert}          
\newcommand{\norm}[1]{\lVert #1 \rVert}         
\newcommand{\br}[1]{\langle #1 \rangle}         
\renewcommand{\Re}{\mathrm{Re}}
\renewcommand{\Im}{\mathrm{Im}}
\newcommand{\p}{\partial}
\newcommand{\eps}{\varepsilon}
\renewcommand{\phi}{\varphi}
\newcounter{sidenote}
\begin{document}

\title[Resolvent estimates for the magnetic Schr\"odinger operator]{Resolvent estimates for the magnetic Schr\"odinger operator in dimensions $\geq 2$} 

\author[C. J. Meroño]{Cristóbal J. Meroño}
\address{Universidad Politécnica de Madrid, ETSI Caminos, Departmento de Matemática e Informática, Campus Ciudad Universitaria, Calle del Prof. Aranguren, 3, 28040 Madrid}
\email{cj.merono@upm.es}

\author[L. Potenciano-Machado]{Leyter Potenciano-Machado}
\address{University of Jyvaskyla, Department of Mathematics and Statistics, PO Box 35, 40014 University of Jyvaskyla, Finland}
\email{leyter.m.potenciano@jyu.fi}

\author[M. Salo]{Mikko Salo}
\address{University of Jyvaskyla, Department of Mathematics and Statistics, PO Box 35, 40014 University of Jyvaskyla, Finland}
\email{mikko.j.salo@jyu.fi}




\begin{abstract}
It is well known that the resolvent of the free Schr{\"o}dinger operator on weighted $L^2$ spaces has norm decaying like $\lambda^{-\frac{1}{2}}$ at energy $\lambda$. There are several works proving analogous high frequency estimates for magnetic Schr\"odinger operators, with large long or short range potentials, in dimensions $n \geq 3$. We prove that the same estimates remain valid in all dimensions $n \geq 2$.
\end{abstract}

\maketitle

\section{Introduction}

Resolvent estimates for Schr\"odinger operators play a fundamental role in stationary scattering theory \cite{reedsimon, H2} and in inverse scattering \cite{Eskin2011}. They are also useful when proving Strichartz, smoothing, and dispersive estimates, eigenvalue estimates, as well as local energy decay for wave and Schr\"odinger equations (see e.g. \cite{EGS1, CardosoCuevasVodev1, RodnianskiTao,  BDSFK17,GHK17}). In many of these applications it is important to understand the high frequency behavior of the resolvent, i.e.\ how the norm bounds depend on the frequency (or energy).

A standard high frequency resolvent estimate for the free Schr\"odinger operator in $\mR^n$ (see e.g.  \cite{Agmon1975}, \cite[Section 7.1]{Yafaev}) states that
\begin{equation} \label{free_resolvent_estimates2}
\norm{\partial^{\alpha}((-\Delta-\lambda\pm i\eps)^{-1}f)}_{L^2_{-\delta}} \leq C \lambda^{\frac{\abs{\alpha}-1}{2}} \norm{f}_{L^2_{\delta}}
\end{equation}
where $\lambda \geq 1$, $0 < \eps \leq 1$, $\delta > 1/2$, $\abs{\alpha} \leq 1$ and  $C$ is independent of $\lambda$ and $f$. The spaces $L^2_{\delta} = L^2_{\delta}(\mR^n)$ are the weighted Agmon spaces, and their  norm is given by 
\begin{equation*}
\norm{f}_{L^2_{\delta}} = \norm{\br{x}^{\delta} f}_{L^2}
\end{equation*}
where $\br{x} = (1+\abs{x}^2)^{1/2}$ and $\delta \in \mR$.

In this work, we consider a first order perturbation of the Laplacian, the magnetic Schr\"odinger operator in $\mR^n$, $n \geq 2$, given by 
\begin{equation} \label{eq:hamiltonian}
H =  -\Delta + 2W \cdot D + (D \cdot W) + V 
\end{equation}
where $D = \frac{1}{i} \nabla $, the magnetic potential $W: \mR^n \to \mR^n$ is a vector field, and the electrostatic potential $V: \mR^n \to \mR$ is a function. We assume that $W \in L^\infty(\mR^n, \mR^n) $ and  $V \in L^\infty(\mR^n, \mR) $.

 A direct perturbation argument shows that \eqref{free_resolvent_estimates2} remains valid  when $-\Delta$ is replaced by the magnetic Schr\"odinger operator $H$, provided that for some $\sigma>0$,
\[
\norm{\br{x}^{1+\sigma} W}_{L^{\infty}} \text{ is sufficiently small}, \qquad \norm{\br{x}^{1+\sigma} V }_{L^{\infty}} < \infty,
\]
and provided that $\lambda$ is sufficiently large (depending on $V$).

If the magnetic potential $W$ is large, the perturbation argument fails, and several works have been devoted to understanding  high frequency resolvent estimates. The articles \cite{EGS1, EGS2, Goldberg} employ harmonic analysis methods and prove high frequency resolvent estimates assuming that $W$ is continuous and the potentials are of short range type. Analogous estimates were proved earlier in \cite[Th\'eor\`eme (5.1)]{Robert} for smooth long range potentials satisfying symbol type bounds, also when the Euclidean metric is replaced by an asymptotically Euclidean metric with no trapped geodesics. The proof was based on a microlocal version of the Mourre commutator method, which  in turn is an instance of a positive commutator method.

In the Euclidean case, the works \cite{CardosoCuevasVodev1, CardosoCuevasVodev2,Vodev} prove high frequency resolvent estimates for long and short range potentials having low regularity, with the most general results given in \cite{Vodev}. Their proofs involve positive commutator arguments combined with ODE techniques, including Carleman estimates, that are valid under low regularity assumptions.  We mention also the works \cite{TesisMiren,Miren14}, in which the Morawetz multiplier method, also related to  positive commutator arguments, is used to allow magnetic potentials with singularities.

Many of the previously mentioned works explicitly assume that the dimension is $n \geq 3$. It is the purpose of this article to show that high frequency resolvent estimates for the magnetic Schr\"odinger operator, under  low regularity assumptions like in \cite{Vodev}, remain valid in all dimensions $n \geq 2$.

 We now assume that the potentials $V$ and $W$  in \eqref{eq:hamiltonian} have both long range and short range parts satisfying the following conditions:
\begin{equation}  \label{shortlong}
 V= V^L+ V^S, \quad W = W^L +W^S, 
\end{equation}
\begin{equation} \label{longrange1} 
|\nabla V^L| \le  C \br{x}^{-1 - \sigma}, \quad | W^L| \le  C \br{x}^{- \sigma},   \quad  |\nabla  W^L| \le C \br{x}^{-1- \sigma}.  
\end{equation}
\begin{equation} \label{shortrange1}  
|V^S| \le C \br{x}^{-1- \sigma},  \quad |W^S| \le C \br{x}^{-1- \sigma},
\end{equation}
for some $\sigma > 0$. 
In some results we will use also the   stronger  condition
\begin{equation} \label{shortrange2} 
 |\nabla \cdot W^S| \le C \br{x}^{-1- \sigma}.
\end{equation}
 We can state now the main result in this work.
\begin{Theorem} \label{thm:main1}
Let  $n \geq 2$ and $z\in \mC$ with $\Re(z) = \lambda$,  $|\Im(z)| \le 1$ and $\Im(z) \neq 0$. Assume that that $W$ and $V$ satisfy \eqref{shortlong} -- \eqref{shortrange1}.  Then, for any $\delta > 1/2$, there exist positive constants $C = C(n, \delta)$ and $\lambda_0 =\lambda_0(n,\delta,V,W)$ such that for every $\lambda \geq \lambda_0$, the resolvent $R(z) = (H-z)^{-1}$ satisfies the estimate
\begin{equation} \label{eq:main_estimate1}
\norm{\partial^{\alpha_1} R(z) \partial^{\alpha_2} f}_{L^2_{-\delta}} \leq C \lambda^{\frac{\abs{\alpha_1} + \abs{\alpha_2}-1}{2}} \norm{ f}_{L^2_{\delta}},
\end{equation}
whenever $\abs{\alpha_1} , \abs{\alpha_2} \leq 1$ and $f \in L^2_{\delta}$. Moreover, if one also assumes the condition \eqref{shortrange2} on the short range magnetic potential, then the estimate
\begin{equation} \label{eq:main_estimate2}
\norm{\partial^{\alpha} R(z)f}_{L^2_{-\delta}} \leq C \lambda^{\frac{\abs{\alpha}-1}{2}} \norm{ f}_{L^2_{\delta}},
\end{equation}
 holds for every $|\alpha| \le 2$.
\end{Theorem}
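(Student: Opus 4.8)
\emph{Reductions.} Since $\Im z \neq 0$, the operator $H-z$ is invertible on $L^2$, so by density it suffices to prove \eqref{eq:main_estimate1} as an a priori bound for $u = R(z)f$ with $f \in \mS(\mR^n)$, the constant $C$ being uniform over the stated range of $z$; the cases $\abs{\alpha}\le 1$ of \eqref{eq:main_estimate2} are then special cases. For $\abs{\alpha}=2$ in \eqref{eq:main_estimate2}, rewrite the equation as $-\Delta u = zu + f - 2W\cdot Du - (D\cdot W)u - Vu$. Here $W, V \in L^\infty$ by hypothesis and $D\cdot W \in L^\infty$ by \eqref{longrange1} and \eqref{shortrange2}, so the right-hand side lies in $L^2_{-\delta}$ with norm $\le C\big(\lambda\norm{u}_{L^2_{-\delta}} + \norm{\partial u}_{L^2_{-\delta}} + \norm{f}_{L^2_{\delta}}\big)$; weighted elliptic regularity then controls $\norm{\partial^{\alpha} u}_{L^2_{-\delta}}$ by the same quantity, and inserting \eqref{eq:main_estimate1} with $\abs{\alpha_1}, \abs{\alpha_2}\le 1$ gives the bound $C\lambda^{1/2}\norm{f}_{L^2_{\delta}}$. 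Finally, $H$ is formally self-adjoint, so $R(z)^* = R(\bar z)$ and the weighted duality $(L^2_{-\delta})' = L^2_{\delta}$ identifies \eqref{eq:main_estimate1} for $(\alpha_1,\alpha_2)$ at $z$ with the one for $(\alpha_2,\alpha_1)$ at $\bar z$; hence we may assume $\abs{\alpha_1}\ge\abs{\alpha_2}$.

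\emph{The positive commutator scheme.} I would follow the positive-commutator-plus-ODE method of \cite{Vodev, CardosoCuevasVodev1}. In polar coordinates $x = r\omega$ ($r=\abs{x}$, $\omega\in S^{n-1}$), the substitution $v(r,\omega) = r^{(n-1)/2} u(r\omega)$ turns $-\Delta$ into $-\partial_r^2 + r^{-2}\big(\Lambda_\omega + c_n\big)$, where $\Lambda_\omega\ge 0$ is the Laplace--Beltrami operator on $S^{n-1}$ and $c_n = \tfrac14(n-1)(n-3)$, while the first order term $2W\cdot D + (D\cdot W)$ and the potential $V$ become perturbations with coefficients controlled by \eqref{longrange1}--\eqref{shortrange1}. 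One derives \eqref{eq:main_estimate1} by testing the resulting Schr\"odinger ODE on $(0,\infty)$ against combinations $\beta(r)\partial_r v$, $\beta'(r)v$ and $r^{-1}\gamma(r)v$, with a family of bounded increasing weights satisfying $\beta'\gtrsim \br{r}^{-1-\sigma}$ outside a fixed ball, integrating by parts in $r$, and combining the resulting bulk and boundary terms with the imaginary part of the pairing of the equation against $v$ (which produces $\norm{\partial u}$ and absorbs $\Im z$). What makes this compatible with the low regularity of $W,V$ is that in every integration by parts only the weights are differentiated: the potential terms stay as multiplication operators and are estimated pointwise through \eqref{longrange1}--\eqref{shortrange2}, the short-range parts being absorbed using the lower bound on $\beta'$ once $\lambda\ge\lambda_0(V,W)$.

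\emph{The dimension $n=2$.} When $n\ge 3$ one has $c_n\ge 0$, and this sign is used in the commutator identity (this is where a multidimensional Hardy inequality implicitly enters, and the reason the usual proofs require $n\ge 3$). For $n=2$, $c_2 = -\tfrac14$, so $r^{-2}(\Lambda_\omega + c_2)$ carries the negative term $-\tfrac14 r^{-2}$, which is the only obstruction. I would split it with a cutoff $\chi$ to $\{r\le 1\}$. On $\{r\ge 1\}$ one has $\abs{(1-\chi(r))\tfrac14 r^{-2}}\le C\br{x}^{-1-\sigma'}$ with $\sigma'=\min(\sigma,1)$, so that piece can be moved into $V^S$ and the scheme above applies unchanged. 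For the compactly supported piece $-\tfrac14\chi(r)r^{-2}$, decompose $v$ into spherical harmonics: on the orthogonal complement of the zero mode, $\Lambda_\omega\ge 1$, hence $r^{-2}(\Lambda_\omega+c_2)\ge \tfrac34 r^{-2}\ge 0$ there, whereas on the zero mode $v_0=v_0(r)$ one is left with $\int_0^\infty\beta'\abs{\partial_r v_0}^2\,dr - \tfrac14\int_0^\infty\beta' r^{-2}\abs{v_0}^2\,dr$; choosing $\beta$ with $\beta'\equiv 1$ on $[0,1]$ (compatibly with the requirements above), and using the sharp one-dimensional Hardy inequality $\tfrac14\int_0^1 r^{-2}\abs{w}^2\,dr \le \int_0^1\abs{w'}^2\,dr$ (valid since $v_0(0)=0$, because $u$ is bounded near the origin and $v_0 = r^{1/2}u_0$), one sees this difference is $\ge 0$, restoring the needed sign. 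Alternatively, one may localize near the origin with a fixed cutoff and estimate the local part directly from the free resolvent bound \eqref{free_resolvent_estimates2}, valid in all $n\ge 2$; but then the cutoff commutators must be reabsorbed, which is exactly where the $\lambda^{-1/2}$ gain has to be used carefully.

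\emph{Main obstacle.} The delicate point is precisely the borderline nature of this last step: the one-dimensional Hardy inequality is saturated, so near the origin the term $\int\beta'\abs{\partial_r v_0}^2$ is entirely consumed in cancelling $-\tfrac14\int\beta' r^{-2}\abs{v_0}^2$ and leaves no residual positivity of $\abs{\partial_r v}$ there. One must therefore verify that every later use of such positivity in the commutator argument can be confined to $\{r\gtrsim 1\}$ or supplied by the term $\lambda\int\beta'\abs{v}^2$, while keeping all constants uniform as $\Im z\to 0$; and this has to be carried out simultaneously with the low-regularity bookkeeping, i.e.\ without ever differentiating $W$ or $V$ beyond what \eqref{longrange1} and \eqref{shortrange2} allow. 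That combination — a saturated Hardy inequality together with merely bounded potentials — is what makes $n=2$ genuinely harder than $n\ge 3$ rather than a formal modification.
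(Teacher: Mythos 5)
Your proposal takes a genuinely different route from the paper. You reduce to a one-dimensional Schr\"odinger equation via $v = r^{(n-1)/2}u$ and a spherical-harmonics decomposition (the Vodev / Cardoso--Cuevas--Vodev ODE scheme), whereas the paper works directly with the multidimensional commutator $i[P,A]$ for $A = i[P,\varphi]$ and the weight $\varphi=\br{x}e^{\tau\psi}$, never passing to polar coordinates. In the paper's argument the only dimension-sensitive object is $\Delta^2\varphi$, and its \emph{sign} is never used: the bound $|\Delta^2\varphi| \le C\tau^4\br{x}^{-3}e^{\tau\psi}$ from \eqref{eq:lap_and_bilap} (a consequence of Lemma \ref{lemma_properties_wheight0}) is enough to absorb the bilaplacian contribution into the large constant $K_2$ once $\Re(z)$ is taken big, as explained in Remark \ref{remark_weight_motivation}. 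That is why the paper's proof is uniform in $n\geq 2$ with no special handling near the origin. Your ODE reduction, by contrast, produces the term $c_n r^{-2}$ with $c_2=-\tfrac14$, an honest inverse-square singularity that must be neutralized rather than merely dominated.

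The difficulty is that your proposed neutralization does not close. The one-dimensional Hardy inequality on the zero mode is sharp: with $\beta'\equiv 1$ near $0$, the entire quantity $\int_0^1|\partial_r v_0|^2\,dr$ is spent cancelling $-\tfrac14\int_0^1 r^{-2}|v_0|^2\,dr$, so no coercivity in $\partial_r v_0$ survives near the origin. A multiplier estimate of the required strength needs a term like $\|\br{x}^{-\delta}\nabla u\|^2$ on the left with a fixed positive coefficient, and you have set that coefficient to zero in a neighborhood of $r=0$ in the worst sector. You flag this yourself under \emph{Main obstacle} but do not resolve it; closing it would require verifying that every later absorption (the Young inequality losses, the $\Im z$ error term, the short-range perturbation) can be localized to $r\gtrsim 1$ or handled by the zeroth-order term $\lambda\int\beta'|v|^2$ alone, and that work is not in the proposal. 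Separately, for \eqref{eq:main_estimate1} the operator contains $D\cdot W^S$ with $W^S$ only bounded, so $\nabla\cdot W^S$ is merely a distribution; it is \emph{not} a multiplication operator as your remark on ``the potential terms stay as multiplication operators'' assumes, and the paper devotes Section \ref{sec:shift} (the $H^{-1}$ shift) specifically to this term. The preliminary reductions in your proposal (the duality $R(z)^*=R(\bar z)$ and the elliptic bootstrap for $|\alpha|=2$ under \eqref{shortrange2}) are fine, but the central estimate has unresolved gaps.
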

Estimate \eqref{eq:main_estimate1}  is  analogous to the results in \cite[Theorem 1.1]{Vodev} but it holds also for $n = 2$. Condition $\eqref{shortrange2}$ already appears in \cite{CardosoCuevasVodev2}. However, the results in both the above mentioned  papers hold  under the  following slightly weaker conditions on the long range potentials:
 \begin{equation} \label{longrange2}  
   \abs{\partial_r V^L} \le C \br{x}^{-1- \sigma}, \quad | W^L| \le  C \br{x}^{- \sigma},  \quad|\partial_r  W^L| \le C \br{x}^{-1- \sigma}. 
\end{equation}
Here $\partial_r:=\frac{x}{|x|}\cdot \nabla$ denotes the radial derivative. 
In our case,  the main a priori estimates in this work (see Lemma   \ref{lemma_longrange_resolvent_carleman} and Proposition \ref{prop_shortrange_carleman} below) are also obtained under the weaker long range conditions \eqref{longrange2}. Essentially,  the stronger conditions \eqref{longrange1} are only needed  for the final density argument used to prove Theorem \ref{thm:main1}. 

We remark that is immediate to see that Theorem \ref{thm:main1} also holds for a Hamiltonian $H = (D + W)^2 +  V$, under the conditions \eqref{shortlong} --\eqref{shortrange1}, since the extra term $W^2$ can be considered as part of the electrostatic potential and  can be decomposed suitably in a short range and long range part.

In the proof of  Theorem \ref{thm:main1} the self-adjointness of $H$ is essential so that the resolvent $R(z)$ can be defined as a bounded operator in  $L^2$  for all $z\in \mathbb{C}$ with $\mathrm{Im}(z)\neq 0$. This does not impose further restrictions on the potentials, since $H$ is self-adjoint  for  $W \in L^\infty(\mR^n, \mR^n) $ and  $V \in L^\infty(\mR^n, \mR) $ (see Proposition \ref{prop_self_adjoint} for a short proof of this basic fact).  
By definition, for $\lambda>0$ in the spectrum of $H$, the operator $(H-\lambda)^{-1}$ cannot be defined as a bounded operator in $L^2$. Nonetheless, it is well known that the limiting absorption principle (see e.g.\ \cite{reedsimon, H2})  provides a way to define the resolvent operators 
\begin{equation}
R(\lambda \pm i0)f := \lim_{\Im(z) \to 0^{\pm}} R(z) f ,
\end{equation} 
as bounded operators from $L^2_{\delta}$ to $L^2_{-\delta}$ for $\delta > 1/2$.

Under certain restrictions, a limiting absorption principle is proved  in  \cite[Theorem 30.2.10]{H2} in the presence of  long range and short range magnetic potentials. Then, it follows from this result that the resolvent $R(\lambda \pm i0)$ will satisfy the same bounds as $R(z)$ in Theorem \ref{thm:main1}. We state this with more precision in the following theorem.
\begin{Theorem} \label{thm:main2}
Assume that the hypotheses from Theorem \ref{thm:main1} hold, together with  \eqref{shortrange2}. Additionally, assume that 
$W^S$  is continuous. Then there is a discrete set $\Lambda \subset \mR_+$ (which is empty if $W^L = V^L = 0$) such that the resolvent for $H$ at energy $\lambda \in \mR_+ \setminus \Lambda$ with $\lambda \geq \lambda_0$ satisfies 
\begin{equation} \label{eq:main_thm2}
\norm{\partial^{\alpha} R(\lambda \pm i0) f}_{L^2_{-\delta}} \leq C \lambda^{\frac{\abs{\alpha}-1}{2}} \norm{f}_{L^2_{\delta}},
\end{equation}
for any $\abs{\alpha} \leq 2$ and $f \in L^2_{\delta}$.
\end{Theorem}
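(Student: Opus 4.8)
The proof combines Theorem~\ref{thm:main1} with the limiting absorption principle, the uniform bounds of Theorem~\ref{thm:main1} being exactly what survives the passage to the real axis. The plan is as follows. First I would invoke \cite[Theorem 30.2.10]{H2} to produce a discrete set $\Lambda \subset \mR_+$, empty when $W^L = V^L = 0$, such that for every $\lambda \in \mR_+ \setminus \Lambda$ the resolvent $R(z)$, viewed as a bounded operator $L^2_{\delta} \to L^2_{-\delta}$, extends continuously from $\{\pm\Im z > 0\}$ up to the real axis near $\lambda$; this is precisely the statement that the limits $R(\lambda \pm i0) f = \lim_{\Im z \to 0^{\pm}} R(z) f$ exist in $L^2_{-\delta}$ for each $f \in L^2_{\delta}$. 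One has to check that the hypotheses of that theorem are met under \eqref{shortlong}--\eqref{shortrange1}, \eqref{shortrange2} and the assumed continuity of $W^S$: here $W^L$ plays the role of a long range magnetic potential and $V^L$ that of a long range electric potential in the sense required there, while $W^S, V^S$ (with $\nabla \cdot W^S$ controlled) are of short range type, and the continuity of $W^S$ is what makes the first order part an admissible perturbation.

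Next, fix $\lambda \in \mR_+ \setminus \Lambda$ with $\lambda \geq \lambda_0$ and consider the sequence $z_k = \lambda + i/k$, so that $\Re(z_k) = \lambda \geq \lambda_0$, $0 < \abs{\Im(z_k)} \leq 1$, and $z_k \to \lambda$ with $\Im(z_k) \to 0^+$ (the case $\Im \to 0^-$ is identical, replacing $i/k$ by $-i/k$). Since \eqref{shortrange2} is assumed, estimate \eqref{eq:main_estimate2} of Theorem~\ref{thm:main1} applies to each $z_k$ and gives
\[
\norm{\partial^{\alpha} R(z_k) f}_{L^2_{-\delta}} \leq C \lambda^{\frac{\abs{\alpha}-1}{2}} \norm{f}_{L^2_{\delta}}, \qquad \abs{\alpha} \leq 2,
\]
with $C = C(n,\delta)$ independent of $k$. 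Thus for each multi-index $\alpha$ with $\abs{\alpha} \leq 2$ the sequence $(\partial^{\alpha} R(z_k) f)_k$ is bounded in the Hilbert space $L^2_{-\delta}$, and after passing to a subsequence it converges weakly in $L^2_{-\delta}$.

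It remains to identify these weak limits. By the first step, $R(z_k) f \to R(\lambda + i0) f$ in $L^2_{-\delta}$, hence also in $\mathscr{D}'(\mR^n)$; since distributional differentiation is continuous, $\partial^{\alpha} R(z_k) f \to \partial^{\alpha} R(\lambda + i0) f$ in $\mathscr{D}'(\mR^n)$. A bounded sequence in $L^2_{-\delta}$ that converges in $\mathscr{D}'(\mR^n)$ converges weakly in $L^2_{-\delta}$ to the same element, so the weak limit of $(\partial^{\alpha} R(z_k) f)_k$ is $\partial^{\alpha} R(\lambda + i0) f$; in particular $R(\lambda + i0) f$ has distributional derivatives up to order two in $L^2_{-\delta}$, and (the limit being independent of the subsequence) the whole sequence converges weakly. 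By weak lower semicontinuity of the norm,
\[
\norm{\partial^{\alpha} R(\lambda \pm i0) f}_{L^2_{-\delta}} \leq \liminf_{k \to \infty} \norm{\partial^{\alpha} R(z_k) f}_{L^2_{-\delta}} \leq C \lambda^{\frac{\abs{\alpha}-1}{2}} \norm{f}_{L^2_{\delta}},
\]
which is \eqref{eq:main_thm2}.

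I expect the only real obstacle to be the first step: one must verify carefully that the low regularity long and short range conditions \eqref{shortlong}--\eqref{shortrange1}, \eqref{shortrange2} — together with continuity of $W^S$ — lie within the scope of \cite[Theorem 30.2.10]{H2}, and that the exceptional set it produces is discrete and is empty in the purely short range case $W^L = V^L = 0$; the limiting argument itself is routine. If one wished to avoid quoting \cite{H2}, an alternative would be to derive the limiting absorption principle directly from the a priori bounds of Theorem~\ref{thm:main1} combined with a compactness argument \`a la Agmon \cite{Agmon1975}, using that $H$ is self-adjoint (Proposition~\ref{prop_self_adjoint}) and that the short range part of the perturbation is relatively compact between the relevant weighted spaces.
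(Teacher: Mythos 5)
Your proposal is correct and follows essentially the same route as the paper: invoke H\"ormander's limiting absorption principle \cite[Theorem 30.2.10]{H2} for the existence of the boundary values $R(\lambda\pm i0)$ and the discrete exceptional set $\Lambda$, then combine the uniform bounds of Theorem~\ref{thm:main1} with weak compactness and weak lower semicontinuity of the norm. The only (harmless) imprecision is that H\"ormander's theorem gives convergence of $\partial^{\alpha}R(z)f$ in the weak$^*$ topology of the Agmon--H\"ormander space $B^*$ rather than convergence in $L^2_{-\delta}$ as you state; the paper bridges this via the continuous embeddings $L^2_{\delta}\subset B$ and $B^*\subset L^2_{-\delta}$ (Lemma~\ref{weak_star}), and your distributional identification of the weak limits goes through unchanged from this weaker mode of convergence.
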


Our proof of Theorem \ref{thm:main1} employs analogous methods to the ones used in \cite{RodnianskiTao,  CardosoCuevasVodev2, Vodev}. As in \cite{Vodev}, we begin by proving  a global Carleman type estimate for the case $W^S = 0$, $ V^S = 0$. To prove this estimate, we use  a positive commutator argument  based on the construction of a suitable conjugate operator as in \cite[Section 6.1]{RodnianskiTao}, and integration by parts.  The conjugate operator is chosen carefully in order to have an estimate that is valid in any dimension $n \geq 2$, and the argument is different from \cite{Vodev} (in fact we only become aware of the works \cite{CardosoCuevasVodev1, CardosoCuevasVodev2, Vodev} after the main part of this paper had been written).

The commutator argument is explained in Section  \ref{sec:commutator} and the proof of the Carleman estimate is given in Section  \ref{sec:longrange}.
This  estimate, stated in Lemma \ref{lemma_longrange_resolvent_carleman}, would already imply \eqref{eq:main_estimate2} for $|\alpha| \le 1$ under  stronger conditions on $W^S$ and $\nabla \cdot W^S$. Then, following \cite{Vodev}, in Section \ref{sec:shift} we shift the previous estimate to  lower index Sobolev spaces to prepare for including the low regularity  term $\nabla \cdot W^S$. To conclude the proof of Theorem  \ref{thm:main1}, in Section \ref{sec:shortrange} we  include the short range perturbation and we extend all the a priori estimates, which hold for  $C^\infty_c$ functions,  to appropriate spaces using the Friedrichs lemma.  Finally, Theorem \ref{thm:main2} is proved  in the last section.

\subsection*{Acknowledgements}

C.M.\ was supported by Spanish government predoctoral grant BES-2015-074055 and projects MTM2014-57769-C3-1-P and MTM2017-85934-C3-3-P. L.P.\ and M.S.\ were supported by the Academy of Finland (Centre of Excellence in Inverse Modelling and Imaging, grant numbers 284715 and 309963) and by the European Research Council under Horizon 2020 (ERC CoG 770924).

\section{The commutator method} \label{sec:commutator}

We will first describe a positive commutator argument for the free resolvent following the presentation in \cite[Section 6]{RodnianskiTao}. Define 
\[
P := -\Delta.
\]
We will construct a first order differential operator $A$ (``conjugate operator") such that $i[P,A]$ is positive. If this is true, then for any $\lambda \in \mR$ and for any test function $u \in C^{\infty}_c(\mR^n)$ we have 
\[
(i[P,A]u, u) = (i[P-\lambda, A] u, u) = i(Au, (P-\lambda) u) - i((P-\lambda)u, A^* u).
\]
Here and below, we write $(\,\cdot\,, \,\cdot\,)$ and $\norm{\,\cdot\,}$ for the inner product and norm on $L^2(\mR^n)$.
If $(i[P,A]u, u)$ is sufficiently positive so that it controls weighted versions of $Au$ and $A^* u$, we can use Young's inequality with $\eps > 0$ in the form $2\abs{(Au, (P-\lambda)u)} \leq \eps \norm{Au}^2 + \eps^{-1} \norm{(P-\lambda)u}^2$ (also with suitable weights) to obtain a resolvent type estimate with $(P-\lambda)u$ on the right.

To motivate the choice of the conjugate operator $A = a_k(x) D_k + b(x)$, we note that $P$ and $A$ have principal symbols $p(x,\xi) = \abs{\xi}^2$ and $a(x,\xi) = a_k(x) \xi_k$.  Notice that we are omitting summation symbols over repeated indices (we will continue to use this convention in the rest of the paper). Then the commutator $i[P,A]$ has principal symbol given by the Poisson bracket 
\[
\{ p, a \} = \nabla_{\xi} p \cdot \nabla_x a - \nabla_x p \cdot \nabla_{\xi} a = 2 \partial_j a_k(x) \xi_j \xi_k.
\]
We want the last quantity to be suitably positive. If we choose $a_k = 2 \partial_k \varphi$ for some function $\varphi$, which happens for instance if $A = i[P,\varphi]$, then $\{ p, a \} = 4 \varphi''(x) \xi \cdot \xi$ where $\varphi''$ is the Hessian matrix of $\varphi$. Thus, if $\varphi$ is a suitable convex function, we expect that $A = i[P,\varphi]$ could have the required properties.

We  consider the long range magnetic perturbation  $P + L + V^L$, where 
\begin{equation} \label{eq:L}
L u := 2W^L \cdot Du + (D \cdot W^L)u,
\end{equation}
and where  $W^L $ and  $V^L$ satisfy conditions \eqref{longrange1}.

\begin{Lemma} \label{lemma_positive_commutator_first}
Let $\varphi \in C^4(\mR^n)$ be real valued, and let 
\[
A := i[P, \varphi] = 2 \partial_j \varphi D_j - i(\Delta \varphi).
\]
Let also  $z \in \mC$. Then, for any $u \in C^{\infty}_c(\mR^n)$ one has 
\begin{multline} \label{positive_commutator_identity_first}
 4(\varphi'' Du, Du) - ( (\Delta^2 \varphi) u, u ) \\
  = -2\Im(Au, (P+L +   V^L -z)u) + 4 \Im(z) (Au,u) - (i[L+  V^L,A]u, u).
\end{multline}
\end{Lemma}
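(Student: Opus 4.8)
The plan is to compute the quadratic form $(i[P+L+V^L-z,A]u,u)$ in two different ways and equate them. On one hand, since $A = i[P,\varphi]$ is (after adding a constant to $\varphi$, which does not change $A$) a concrete first-order operator, one computes the commutator $i[P,A] = i[P,i[P,\varphi]]$ directly. The double-commutator identity (or a direct calculation in Fourier/physical space) gives $i[P,A] = i[-\Delta, 2\partial_j\varphi\, D_j - i\Delta\varphi]$, and a routine integration-by-parts computation yields the symmetric form $(i[P,A]u,u) = 4(\varphi'' Du, Du) - ((\Delta^2\varphi)u,u)$; this is exactly the left-hand side of \eqref{positive_commutator_identity_first}. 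This is the ``positive commutator with the Laplacian'' computation motivated in the paragraph preceding the lemma, where $\{p,a\} = 4\varphi''\xi\cdot\xi$ was identified as the principal symbol.

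On the other hand, I would expand $(i[P+L+V^L-z,A]u,u)$ by linearity of the commutator as $(i[P,A]u,u) + (i[L+V^L,A]u,u) + (i[-z,A]u,u)$. The term $(i[-z,A]u,u)$ does not vanish because $A$ is not symmetric: writing $A = A_{\mathrm{sym}} + A_{\mathrm{anti}}$ or, more directly, $i[-z,A]u = -iz Au + i\bar z A^* u$ acting in the form sense... actually more carefully, $(i[-z,A]u,u) = i(-z)(Au,u) + i\overline{(-z)}\,\overline{(u,Au)}$? The cleanest route is to not split off $z$ abstractly but instead to use the algebraic identity, valid for any operator $B$ with $B = P+L+V^L$ formally self-adjoint and any $z\in\mC$:
\[
(i[B,A]u,u) = i(Au,(B-z)u) - i((B-z)u,A^*u) + i(z-\bar z)(Au,u),
\]
which follows by writing $(i[B,A]u,u) = i(BAu,u) - i(ABu,u)$, moving $B$ onto $u$ using self-adjointness, and inserting $\pm z$. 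Since $z - \bar z = 2i\Im(z)$, the last term equals $-2\Im(z)(Au,u)$; combining $i(Au,(B-z)u) - i((B-z)u,A^*u)$ and using $A^* = A - 2i\Delta\varphi$ (so $A-A^* = 2i\Delta\varphi$, or rather computing $-i((B-z)u,A^*u) = -i((B-z)u,Au) + \text{l.o.t.}$) reorganizes the cross terms into $-2\Im(Au,(B-z)u)$ plus a correction. Carrying the bookkeeping through, the $\Im(z)$ contributions assemble into the $+4\Im(z)(Au,u)$ term in \eqref{positive_commutator_identity_first} (the factor $4$ rather than $2$ arises precisely because $A$ is not self-adjoint, so both $i[-z,A]$ and the imbalance between $Au$ and $A^*u$ feed in), and the remaining commutator $(i[L+V^L,A]u,u)$ is moved to the right-hand side with a minus sign.

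The main obstacle — really the only nontrivial point — is the careful tracking of the non-self-adjointness of $A$ and of the imaginary part of $z$, since a naive application of $(i[B-z,A]u,u) = i(Au,(B-z)u) - i((B-z)u,A^*u)$ would be valid only if $B-z$ were self-adjoint, which it is not. One must use that $P$, $L$, and $V^L$ are individually formally self-adjoint on $C^\infty_c(\mR^n)$ (this is why $L$ is written in the manifestly symmetric form $2W^L\cdot D + D\cdot W^L$ with $W^L$ real and $V^L$ real), justify all integrations by parts using $u \in C^\infty_c$, and keep the $z$-dependent terms separate throughout. Everything else is a direct computation: the only genuinely analytic input is that the double commutator $i[P,i[P,\varphi]]$ produces the Hessian form $4(\varphi''Du,Du)$ together with the lower-order scalar term $-((\Delta^2\varphi)u,u)$, which one verifies by expanding $P = -\partial_k\partial_k$ and repeatedly integrating by parts. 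I would present the self-adjoint bookkeeping as a short lemma-internal computation and then simply assemble the terms.
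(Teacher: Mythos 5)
Your overall route is the same as the paper's: compute $i[P,A]$ explicitly to get the Hessian/bilaplacian form on the left, split $i[P,A]=i[P+L+V^L-z,A]-i[L+V^L,A]$, and move the formally self-adjoint operators across the $L^2$ pairing to produce the right-hand side. However, the step you yourself single out as ``the only nontrivial point'' --- the adjoint bookkeeping --- is where your proposal goes wrong. The operator $A=2\partial_j\varphi D_j-i(\Delta\varphi)$ is formally \emph{self-adjoint}: it is exactly the symmetrization $\nabla\varphi\cdot D+D\cdot\nabla\varphi$, the zeroth-order term $-i\Delta\varphi$ being precisely the symmetrizing correction, and the paper uses $(Au,v)=(u,Av)$ for $u,v\in C^\infty_c$ as the key structural fact. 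Your formula $A^*=A-2i\Delta\varphi$ is the adjoint of the first-order part $2\partial_j\varphi D_j$ alone; if you feed it into your identity you are left with uncancelled terms of the type $\bigl((B-z)u,(\Delta\varphi)u\bigr)$ (with $B=P+L+V^L$) which do not occur in \eqref{positive_commutator_identity_first} and are not proportional to $\Im(z)$, so the computation does not close. A second error: $(i[-z,A]u,u)$ vanishes identically, because a scalar commutes with every operator; whether $A$ is symmetric is irrelevant. Hence $(i[B-z,A]u,u)=(i[B,A]u,u)$ is independent of $z$, and the $\Im(z)(Au,u)$ term in the lemma arises solely from rewriting $(Au,Bu)$ in terms of $(Au,(B-z)u)$ (equivalently, from the $\bar z$ versus $z$ in the paper's intermediate identity $(i[B-z,A]u,u)=i(Au,(B-\bar z)u)-i((B-z)u,Au)$), not from any ``imbalance between $Au$ and $A^*u$''.

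Because of this, the decisive assembly (``carrying the bookkeeping through'') is asserted rather than performed, and the mechanism you offer for the coefficient of $\Im(z)(Au,u)$ is unfounded. For the record, carrying it out correctly with $A^*=A$, $B^*=B$ and the paper's convention (inner product linear in the first slot) gives
\begin{equation*}
(i[P,A]u,u)=-2\,\Im\bigl(Au,(P+L+V^L-z)u\bigr)+2\,\Im(z)\,(Au,u)-(i[L+V^L,A]u,u),
\end{equation*}
where $(Au,u)$ is real by symmetry of $A$; since later in the paper only $\abs{\Im(z)(Au,u)}$ is used as an upper bound, the difference with the stated coefficient $4$ is harmless downstream, but it confirms that a constant cannot be produced by the non-self-adjointness argument you describe. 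To repair the proposal: verify $A^*=A$ by a one-line integration by parts, note $[z,A]=0$, use self-adjointness of $B$ and $A$ to write $(i[B,A]u,u)=i(Au,Bu)-i(Bu,Au)=-2\Im(Au,Bu)$, and then insert $\pm z$ in the first pairing; the explicit computation of $i[P,A]=4D_k(\partial_{jk}\varphi D_j\,\cdot)-\Delta^2\varphi$, which you correctly treat as routine, is the only other ingredient.
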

\begin{proof}
With the given choice of $A$, we compute 
\begin{align*}
i[P,A]u &= -i\Delta(2 \partial_j \varphi D_j u - i(\Delta \varphi) u) + i (2 \partial_j \varphi D_j - i(\Delta \varphi))(\Delta u) \\
 &= -2i \Delta \p_j \phi D_j u - 4i \p_{jk} \phi D_j \p_k u - (\Delta^2 \phi) u - 2 \p_j \Delta \phi \p_j u \\
 &= 4 D_k (\p_{jk} \varphi D_j u) - (\Delta^2 \phi) u.
\end{align*}
Note that $(Au, v) = (u, Av)$ for $u, v \in C^{\infty}_c(\mR^n)$. The result follows since 
\begin{align*}
&4(\varphi'' Du, Du) - ( (\Delta^2 \varphi) u, u ) = (i[P,A]u, u) \\
 &= (i[P+L+  V^L-z,A]u, u) - (i[L+  V^L,A]u, u),
\end{align*}
and 
\begin{multline*}
(i[P+L+  V^L-z,A]u, u)   = i(Au, (P+L+  V^L-\bar{z})u) \\
 - i((P+L+  V^L-z)u, Au). \qedhere
\end{multline*}
\end{proof}

Note that the left hand side of \eqref{positive_commutator_identity_first} is independent of $z$. To describe the dependence on $\Re(z)$, we need the following lemma.

\begin{Lemma} \label{lemma_positive_commutator_second}
Let $\eta \in C^2(\mR^n)$ be real valued, and let $z \in \mC$. Then for any $u \in C^{\infty}_c(\mR^n)$, 
\begin{multline} \label{eq:rez_usquared}
2 \Re(z) \int \eta |u|^2 = - \int (\Delta \eta) |u|^2 + 2 \int \eta \abs{\nabla u}^2 \\
 - \int \eta ( ((P-z)u) \bar{u} + u(P - \bar{z}) \bar{u} ).
\end{multline}
\end{Lemma}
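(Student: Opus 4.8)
The plan is to prove the identity \eqref{eq:rez_usquared} by pure integration by parts, exploiting that $P = -\Delta$ and that $\eta$ is real valued, so that no genuine analytic input is needed beyond the compact support of $u$. First I would expand the last integrand: since $(P-z)u = -\Delta u - zu$ and $(P-\bar z)\bar u = -\Delta \bar u - \bar z \bar u$, one gets
\[
((P-z)u)\bar u + u(P-\bar z)\bar u = -(\Delta u)\bar u - u(\Delta \bar u) - 2\Re(z)\abs{u}^2,
\]
using $z + \bar z = 2\Re(z)$. The term $-2\Re(z)\int \eta \abs{u}^2$ arising from this cancels exactly the left-hand side of \eqref{eq:rez_usquared}, so the whole statement reduces to the $z$-independent identity
\[
0 = -\int (\Delta \eta)\abs{u}^2 + 2\int \eta \abs{\nabla u}^2 + \int \eta\big( (\Delta u)\bar u + u(\Delta \bar u) \big).
\]

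To establish this, I would integrate by parts in the last term. Moving one derivative off $\Delta u$ onto $\eta \bar u$ gives $\int \eta (\Delta u)\bar u = -\int (\nabla \eta \cdot \nabla u)\bar u - \int \eta \abs{\nabla u}^2$, with no boundary contribution since $u \in C^\infty_c(\mR^n)$ and $\eta \in C^2$; the conjugate computation yields the analogous expression with $u$ and $\bar u$ interchanged. Adding the two and using that $\eta$ is real, the two $\abs{\nabla u}^2$ contributions combine into $-2\int \eta \abs{\nabla u}^2$, while the remaining two terms assemble into $-\int \nabla \eta \cdot \nabla(\abs{u}^2)$ since $\bar u \nabla u + u \nabla \bar u = \nabla(\abs{u}^2)$. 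One final integration by parts turns $-\int \nabla \eta \cdot \nabla(\abs{u}^2)$ into $\int (\Delta \eta)\abs{u}^2$, and substituting this back makes all the terms cancel, which proves the reduced identity and hence \eqref{eq:rez_usquared}.

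I do not anticipate any real obstacle: the lemma is entirely computational. The only point requiring mild care is the bookkeeping of complex conjugates, but because $\eta$ is real valued the manipulations for $u$ and for $\bar u$ are exact mirror images, so the two boundary-free integrations by parts are perfectly symmetric; the hypotheses $u \in C^\infty_c(\mR^n)$ and $\eta \in C^2(\mR^n)$ are precisely what is needed to justify each step.
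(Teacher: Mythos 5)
Your proof is correct and follows essentially the same route as the paper: both reduce the identity to the Green-type formula $\int (\Delta \eta)\abs{u}^2 = 2\int \eta \abs{\nabla u}^2 + \int \eta\bigl((\Delta u)\bar u + u \Delta\bar u\bigr)$ via boundary-free integration by parts, differing only in bookkeeping (you cancel the $\Re(z)$ terms first, the paper substitutes $\Delta u = (\Delta + z)u - zu$ at the end).
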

\begin{proof}
This is a direct integration by parts:
\begin{align*}
- \int (\Delta \eta) |u|^2 &= -\int \eta \Delta(u \bar{u}) \\
 &= -2 \int \eta |\nabla u|^2 - \int \eta ((\Delta u) \bar{u} + u \overline{\Delta u}).
\end{align*}
The result follows by writing $\Delta u = (\Delta+z)u - zu$ in the last terms on the right.
\end{proof}

We can combine the previous lemmas with suitable choices of $\phi$ and $\eta$ to obtain an a priori estimate for the long range magnetic resolvent. 
We need a weight $\phi$ with a large positive Hessian, so that we can later absorb certain terms on the left hand side  of \eqref{positive_commutator_identity_first}. See Remark \ref{remark_weight_motivation} below for motivation for the choice of $\varphi$.

Let $\tau\ge 1$ and write $r = \abs{x}$. We consider a radial weight
\[  \varphi  : = \tilde \phi e^{\tau \psi}\]
with $\psi(x) = \psi_0(r)$ and $\tilde \phi(x) = \tilde \phi_0(r)$, for appropriate choices of $\tilde \phi_0$ and $\psi_0$. 
The  Hessian of $\phi$ satisfies
\[ \phi'' = e^{\tau \psi}  \left( \tilde \phi'' + \tau \nabla \tilde \phi \otimes \nabla \psi + \tau \nabla \psi \otimes \nabla \tilde \phi   + \tau^2 \tilde \phi \nabla \psi \otimes \nabla \psi  + \tau \tilde \phi \psi'' \right). \]
We are going to choose $\tilde \phi(x) =\tilde \phi_0(r) = \br{r}$, where $r=|x|$. Note that the Hessian of $\tilde{\phi}$ is positive semidefinite, i.e.\ $\tilde \phi''(x) \ge  0$. Also, let us take $\psi(x)= \psi_0(r) = 1- \br{r}^{1-2\delta}$. Writing explicitly the Hessian of $\psi$ in terms of the derivatives of $\psi_0(r)$, yields
\[
(\psi''(x) \nabla u, \nabla u)=(\psi_0''(r)\partial_r u, \partial_r u)+ (\frac{\psi_0'(r)}{r}\nabla^{\perp} u, \nabla^{\perp} u),
\]
where $\nabla^{\perp} u:=\nabla u-(\nabla u \cdot \hat{x}) \hat{x}$ with $\hat{x} = x/\abs{x}$, so that $\abs{\nabla u}^2 = (\p_r u)^2 + \abs{\nabla^{\perp} u}^2$ (notice that  $\psi''$ denotes the Hessian matrix of $\psi$ and $\psi_0'' = \frac{d^2 \psi_0}{d\, r^2}$). Thus, using the condition $\tilde{\varphi}'' \geq 0$, we get
\begin{multline} \label{eq:hess1}
 (\phi''(x)\nabla u,\nabla u) \geq   2\tau ( e^{\tau \psi} \tilde \phi_0'\psi_0'\partial_r u,  \partial_r u) + \tau^2 (e^{\tau \psi} \tilde \phi_0 (\psi_0')^2 \partial_r u,  \partial_r u)  \\ 
 +  \tau ( e^{\tau \psi} \tilde \phi_0 \psi_0'' \partial_r u,\partial_r u) + \tau ( e^{\tau \psi} \tilde \phi_0 \frac{\psi_0'}{r} \nabla^{\perp} u,\nabla^{\perp} u).
\end{multline}
We remark that, since $\phi$ and $\psi$ are increasing, all the terms in the right hand side of \eqref{eq:hess1} are positive, except for the third one.
\begin{Lemma} \label{lemma_properties_wheight0}
Let $1/2<\delta<1$, and let $\varphi(x)  = \br{x} e^{\tau \psi}$, with $\psi(x) = \psi_0(r) = 1-\br{r}^{1-2\delta}$ and $\tau \ge 1$. Then, if $\beta = 2(1-\delta)(2\delta-1)$  we have that
\begin{equation} \label{eq:hess2}
 (\phi''(x) \nabla u,\nabla u) \geq  \beta \tau  \norm{ e^{\frac{1}{2}\tau \psi}  w_1 \nabla u}^2  + \tau^2 \norm{e^{\frac{1}{2}\tau \psi} w_2 \partial_r u}^2,  
\end{equation}
where $w_1^2(x)= \br{x}^{-2\delta}$, and $w_2^2(x)= \br{r} (\psi_0'(r))^2$ with $r=|x|$. Moreover, if $\alpha$ is a multi-index such that $|\alpha| =N$, then 
\begin{equation} \label{eq:wheight_derivatives}
    |\partial^{\alpha} \phi (x)| \le C_{N}\tau^{N} \br{x}^{1-N}e^{\tau \psi}, 
\end{equation}
\end{Lemma}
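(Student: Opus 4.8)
The plan is to read off everything from the lower bound \eqref{eq:hess1}, which is already in hand, together with the explicit form of $\psi_0$ and $\tilde\varphi_0$. First I would record the relevant derivatives: since $\tilde\varphi_0(r) = \br{r}$ and $\psi_0(r) = 1 - \br{r}^{1-2\delta}$, with $\frac{d}{dr}\br{r} = r\br{r}^{-1}$, one computes
\[
\tilde\varphi_0'(r) = r\br{r}^{-1}, \qquad \psi_0'(r) = (2\delta-1)\, r\, \br{r}^{-1-2\delta},
\]
so that $\psi_0' \ge 0$ (consistent with $\psi$ being increasing), and, using $r^2 = \br{r}^2 - 1$,
\[
\psi_0''(r) = (2\delta-1)\br{r}^{-1-2\delta}\bigl(-2\delta + (1+2\delta)\br{r}^{-2}\bigr).
\]
Note already that $\tilde\varphi_0(\psi_0')^2 = \br{r}(\psi_0')^2 = w_2^2$, so the second term on the right-hand side of \eqref{eq:hess1} is \emph{exactly} $\tau^2\norm{e^{\frac12\tau\psi} w_2 \partial_r u}^2$, which is the second term in \eqref{eq:hess2}.

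It then remains to produce $\beta\tau\norm{e^{\frac12\tau\psi} w_1 \nabla u}^2$, with $w_1^2 = \br{x}^{-2\delta} = \br{r}^{-2\delta}$, from the first, third and fourth terms of \eqref{eq:hess1}. For the angular contribution I would use the fourth term: since $\tilde\varphi_0 \psi_0'/r = (2\delta-1)\br{r}^{-2\delta} = (2\delta-1) w_1^2$ and $\beta = 2(1-\delta)(2\delta-1) \le 2\delta-1$ (because $2(1-\delta) \le 1$ for $\delta \ge 1/2$), this term dominates $\beta\tau\norm{e^{\frac12\tau\psi} w_1 \nabla^\perp u}^2$. For the radial contribution I would combine the first and third terms, whose common coefficient is $\tau e^{\tau\psi}\bigl(2\tilde\varphi_0'\psi_0' + \tilde\varphi_0\psi_0''\bigr)$; substituting the formulas above and simplifying once more with $r^2 = \br{r}^2 - 1$ should give the identity
\[
2\tilde\varphi_0'\psi_0' + \tilde\varphi_0\psi_0'' = 2(1-\delta)(2\delta-1)\br{r}^{-2\delta} + (2\delta-1)^2\br{r}^{-2-2\delta} \ge \beta\, w_1^2,
\]
using $\delta > 1/2$ to drop the last (nonnegative) term. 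Hence the first and third terms together dominate $\beta\tau\norm{e^{\frac12\tau\psi} w_1 \partial_r u}^2$, and adding this to the angular contribution and using $\abs{\nabla u}^2 = (\partial_r u)^2 + \abs{\nabla^\perp u}^2$ (and that $w_1$ is independent of direction) yields \eqref{eq:hess2}. The positivity of $\beta$ comes precisely from the hypothesis $1/2 < \delta < 1$.

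For the derivative bound \eqref{eq:wheight_derivatives} I would argue by the Leibniz and chain rules. Since $1-2\delta<0$ and $\br{x}\ge 1$, differentiating $\psi = 1 - \br{x}^{1-2\delta}$ gives $\abs{\partial^\beta\psi} \le C_\beta\br{x}^{1-2\delta-\abs{\beta}} \le C_\beta\br{x}^{-\abs{\beta}}$ for $\abs{\beta}\ge 1$; feeding this into the chain rule for $e^{\tau\psi}$ (whose derivatives are $e^{\tau\psi}$ times sums of products of factors $\tau\,\partial^{\beta}\psi$ with $\abs{\beta}\ge 1$) and using $\tau\ge 1$ gives $\abs{\partial^\beta(e^{\tau\psi})} \le C_\beta\,\tau^{\abs{\beta}}\br{x}^{-\abs{\beta}}e^{\tau\psi}$. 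Combining this with $\abs{\partial^\gamma\br{x}} \le C_\gamma\br{x}^{1-\abs{\gamma}}$ via the Leibniz rule applied to $\varphi = \br{x}\,e^{\tau\psi}$ gives the claim.

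The only genuinely computational point is the displayed identity for $2\tilde\varphi_0'\psi_0' + \tilde\varphi_0\psi_0''$, which is what pins down the constant $\beta$; the rest is bookkeeping with the radial–angular splitting. I do not expect a real obstacle: $\psi_0$ has evidently been designed so that this combination has a clean, positive leading term $\br{r}^{-2\delta}$ (up to the harmless factor $\beta$), with the remaining piece having a sign.
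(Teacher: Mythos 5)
Your proposal is correct and follows essentially the same route as the paper: you combine the first and third terms of \eqref{eq:hess1} to control the radial part (your exact decomposition $2\tilde\varphi_0'\psi_0'+\tilde\varphi_0\psi_0''=\beta\br{r}^{-2\delta}+(2\delta-1)^2\br{r}^{-2-2\delta}$ is just a sharper rewriting of the paper's inequality $\ge (2-2\delta)(2\delta-1)\br{r}^{-2\delta}$), use the fourth term with $\tilde\varphi_0\psi_0'/r=(2\delta-1)\br{r}^{-2\delta}\ge\beta\br{r}^{-2\delta}$ for the angular part, and keep the $\tau^2$ term as $w_2$; the derivative bound \eqref{eq:wheight_derivatives} by Leibniz/chain rule matches the paper's ``direct computation.''
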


Since the proof is a straightforward computation, we leave it to Appendix \ref{sec_appendix}. As a consequence  \eqref{eq:wheight_derivatives} we get that
\begin{equation} \label{eq:lap_and_bilap}
|\Delta \phi(x)| \le C\tau^2  \br{x}^{-1} e^{\tau \psi} , \qquad |\Delta^2 \phi(x)| \le C\tau^4  \br{x}^{-3}e^{\tau \psi} ,
\end{equation}
which will be used in the proof of the following lemma. 
We state now the main Carleman type estimate for the long range magnetic resolvent.

\begin{Lemma} \label{lemma_longrange_resolvent_carleman} 
Let $\sigma >0$, $1/2<\delta<\infty$, and $\tau> 6 \beta^{-1}$. Let $V^L \in L^\infty(\mR^n, \mR)$ and $W^L \in L^\infty(\mR^n, \mR^n)$ satisfy \eqref{longrange2} as well as such that $\nabla \cdot W^L \in L^2_{loc}(\mR^n)$. Then  for any $v \in C^{\infty}_c(\mR^n)$, 
\begin{multline} \label{eq:Lrange_est}
\Re(z) \norm{ v}_{L^2_{-\delta}}^2 + \norm{ \nabla v}_{L^2_{- \delta}}^2   \\
 \leq 64 \beta^{-1} \tau^{-1}\norm{e^{\frac{1}{2}\tau\psi}(P+L+  V^L-z)e^{-\frac{1}{2}\tau \psi} v}_{L^2_{ \delta}}^2  + C(\beta,\tau) |\Im(z)| \Re(z)^{1/2}  \norm{v}^2,
\end{multline}
whenever $z \in \mC$ with $\abs{\Im(z)} \leq 1$ and $\Re(z) \geq C(n,\delta,\tau, W^L,   V^L) \ge 1$. 
\end{Lemma}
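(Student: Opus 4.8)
The plan is to conjugate the identity of Lemma \ref{lemma_positive_commutator_first} by the Carleman weight $e^{\frac{1}{2}\tau\psi}$ and exploit the positivity of the Hessian term supplied by Lemma \ref{lemma_properties_wheight0}. Set $v \in C^\infty_c(\mR^n)$ and put $u = e^{-\frac{1}{2}\tau\psi} v$; since $\psi$ is smooth and bounded with bounded derivatives, $u \in C^\infty_c$ as well. Apply \eqref{positive_commutator_identity_first} with $\varphi = \br{x} e^{\tau\psi}$ (so $A = i[P,\varphi]$ as in Lemma \ref{lemma_positive_commutator_first}), and then rewrite every $L^2$ pairing involving $u$ in terms of $v$: e.g. $(\varphi'' Du, Du)$ becomes a weighted pairing in $v$ plus commutator errors coming from the fact that $D$ does not commute with $e^{-\frac12\tau\psi}$, and likewise $4\Im(z)(Au,u)$ becomes $4\Im(z)$ times a pairing in $v$. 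The main positive term on the left, $4(\varphi'' Du, Du)$, is bounded below using \eqref{eq:hess2} by $4\beta\tau \norm{e^{\frac12\tau\psi} w_1 \nabla u}^2 + 4\tau^2\norm{e^{\frac12\tau\psi} w_2 \partial_r u}^2$; rewriting $\nabla u = e^{-\frac12\tau\psi}(\nabla v - \frac12\tau(\nabla\psi)v)$ and expanding the square, the leading contribution is $4\beta\tau\norm{w_1\nabla v}^2$ minus cross terms and a term $\tau^3\norm{w_1|\nabla\psi| v}^2$ that, crucially, is controlled by the good $\tau^2\norm{e^{\frac12\tau\psi}w_2\partial_r u}^2$ term because $w_2^2 = \br{r}(\psi_0')^2$ and $w_1^2|\nabla\psi|^2 = \br{x}^{-2\delta}(\psi_0')^2 \lesssim \br{x}^{-1}(\psi_0')^2$, i.e. $w_1^2|\nabla\psi|^2 \lesssim \br{r}^{-2} w_2^2$, so a factor of $\br{r}^{-2}\le 1$ is gained (this is the key algebraic identity enabling the $n\ge 2$ argument). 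Since $w_1^2 = \br{x}^{-2\delta}$, the term $\beta\tau\norm{w_1\nabla v}^2$ is exactly $\beta\tau\norm{\nabla v}_{L^2_{-\delta}}^2$.

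Next I handle $\Re(z)\norm{v}_{L^2_{-\delta}}^2$ using Lemma \ref{lemma_positive_commutator_second}. Apply \eqref{eq:rez_usquared} with $\eta = e^{\tau\psi} w_1^2 = e^{\tau\psi}\br{x}^{-2\delta}$ (or directly with a weight producing $\br{x}^{-2\delta}|v|^2$ after un-conjugating), so that $2\Re(z)\int\eta|u|^2$ — which becomes $2\Re(z)\norm{v}_{L^2_{-\delta}}^2$ plus lower order — is expressed through $\int\eta|\nabla u|^2$, $\int(\Delta\eta)|u|^2$, and a pairing against $(P-z)u$. The gradient term $\int\eta|\nabla u|^2$ is absorbed by the good gradient term already produced provided $\tau$ is large (this is where $\tau > 6\beta^{-1}$ enters quantitatively: one needs the $\beta\tau$ coefficient to dominate a fixed multiple coming from $\eta$); the $\Delta\eta$ term is $O(\tau^2\br{x}^{-2\delta-2}|u|^2)$ and hence negligible against $\Re(z)\br{x}^{-2\delta}|u|^2$ once $\Re(z)$ is large; and the $(P-z)u$ pairing is split, after reintroducing $L + V^L$, into a piece absorbed into the right-hand side $\norm{e^{\frac12\tau\psi}(P+L+V^L-z)e^{-\frac12\tau\psi}v}_{L^2_\delta}^2$ via Cauchy–Schwarz with a small parameter, plus commutator errors with $L+V^L$.

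The remaining work is to estimate the perturbation commutator $(i[L+V^L,A]u,u)$ and the first-order error terms generated throughout. Using $A = 2\partial_j\varphi D_j - i\Delta\varphi$ and the bounds \eqref{eq:wheight_derivatives}, \eqref{eq:lap_and_bilap} on derivatives of $\varphi$, together with the long-range decay \eqref{longrange2} on $W^L, V^L$ and on $\partial_r W^L, \partial_r V^L$, every term in this commutator is bounded by $C\tau^{N}$ times a weighted $L^2$ pairing that carries strictly faster decay than $\br{x}^{-2\delta}$ (picking up the extra $\br{x}^{-\sigma}$ from the long-range hypotheses, or an extra negative power from derivatives of $\varphi$); hence, after a Cauchy–Schwarz, each is bounded by $C\tau^N\br{x}^{-\sigma}$ times $\norm{\nabla v}_{L^2_{-\delta}}^2 + \norm{v}_{L^2_{-\delta}}^2$, which can be absorbed into the left side for $\Re(z)$, and also $\tau$, large enough — this is precisely why $\lambda_0$ is allowed to depend on $\tau$, $W^L$, $V^L$. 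The $4\Im(z)(Au,u)$ term, after un-conjugating and using $|A u| \lesssim \tau\br{x}(|\nabla v| + \tau\br{x}^{-1}|v|) + \tau^2|v|$ with the weight $e^{\tau\psi}$, contributes $\lesssim \tau^2 |\Im(z)|$ times a weighted pairing; estimating that pairing by $\norm{v}$ and the already-controlled weighted gradient (via the eigenvalue-type bound $\norm{\nabla v}_{L^2_{-\delta}} \lesssim \Re(z)^{1/2}\norm{v}$ that is a byproduct of the estimate being proved, or more carefully by Cauchy–Schwarz with a $\Re(z)^{1/2}$ split) produces the stated error $C(\beta,\tau)|\Im(z)|\Re(z)^{1/2}\norm{v}^2$. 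Collecting: the positive left side is $\gtrsim \beta\tau(\Re(z)\norm{v}_{L^2_{-\delta}}^2 + \norm{\nabla v}_{L^2_{-\delta}}^2)$, the main right side is the conjugated source term times $64\beta^{-1}\tau^{-1}$ (the constant coming from a Young's inequality $2|(\cdot,\cdot)| \le \frac{\beta\tau}{?}\|\cdot\|^2 + \frac{?}{\beta\tau}\|\cdot\|^2$ tuned so that the absorbed part is at most half of the positive term, dividing through by $\beta\tau$), and everything else is an error absorbed for $\Re(z)$ large.

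The main obstacle is the bookkeeping of the commutator with the conjugating weight: one must verify that \emph{every} error term — those from $[D, e^{-\frac12\tau\psi}]$ in rewriting $(\varphi'' Du, Du)$, those from $i[L+V^L,A]$, and those from the $(P-z)u$ pairing in Lemma \ref{lemma_positive_commutator_second} — carries either an extra factor $\br{x}^{-\sigma}$ or $\br{x}^{-1}$ (improving decay past $\br{x}^{-2\delta}$) or is genuinely absorbed by the $\tau^2\norm{e^{\frac12\tau\psi}w_2\partial_r u}^2$ reservoir via the inequality $w_1^2|\nabla\psi|^2\lesssim\br{r}^{-2}w_2^2$. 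Getting the explicit constant $64\beta^{-1}$ right requires being careful that the source term enters only through one Young's inequality and that the absorbed half-share is accounted for exactly; I would first prove the estimate with an unspecified constant $C(\beta)\tau^{-1}$ on the source term and then, if the sharp constant is needed, retrace the single Young split that produces it.
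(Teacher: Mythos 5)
Your plan follows essentially the same route as the paper: the commutator identity of Lemma \ref{lemma_positive_commutator_first} with $\varphi=\br{x}e^{\tau\psi}$, the Hessian lower bound \eqref{eq:hess2}, Lemma \ref{lemma_positive_commutator_second} with $\eta=e^{\tau\psi}w_1^2$ to generate the $\Re(z)$ term, absorption of $i[L+V^L,A]$ via the long range decay \eqref{longrange2} and \eqref{eq:wheight_derivatives}, a single Young split for the source term, and the treatment of $4\Im(z)(Au,u)$ through $\norm{\nabla u}^2\leq 2\abs{((P+L+V^L-z)u,u)}+4\Re(z)\norm{u}^2$, yielding the $C(\beta,\tau)\abs{\Im(z)}\Re(z)^{1/2}\norm{v}^2$ error; whether one conjugates by $e^{\frac{1}{2}\tau\psi}$ at the start (you) or substitutes $v=e^{\frac{1}{2}\tau\psi}u$ at the end (the paper) is immaterial. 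One sub-claim, however, is wrong as stated: the zeroth-order conjugation error $\sim\beta\tau^3\norm{w_1\abs{\nabla\psi}v}^2$ cannot be absorbed by the reservoir $\tau^2\norm{e^{\frac{1}{2}\tau\psi}w_2\partial_r u}^2$, since a first-order positive term does not dominate a term with no derivative on $v$, regardless of the (correct) pointwise inequality $w_1^2\abs{\nabla\psi}^2\lesssim\br{r}^{-2}w_2^2$. The fix is the one you already use for the $\Delta\eta$ and commutator errors: bound it by $C\beta\tau^3\norm{w_1v}^2$ and absorb it into $\beta\tau\Re(z)\norm{w_1v}^2$ once $\Re(z)\gtrsim\tau^2$ (this is exactly what the paper does with the analogous $(\beta\tau-3)\tau^2\norm{w_1v}^2$ term). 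Relatedly, this identity is not ``the key enabling $n\geq 2$'': the actual role of the $w_2$ term is to absorb the radial part of $Au$ (which carries the non-decaying factor $\tau\br{r}\psi_0'$) in the pairing with the source, while the dimension-independence comes from not requiring $\Delta^2\varphi\geq 0$ — the bilaplacian term, which your outline passes over quickly, is handled via $\abs{\Delta^2\varphi}\leq C\tau^4\br{x}^{-3}e^{\tau\psi}$ and absorbed into the $\Re(z)$ term for $\Re(z)$ large (using $\delta<1$ without loss of generality). With these two points repaired, your outline matches the paper's proof, up to the bookkeeping needed for the explicit constant $64\beta^{-1}\tau^{-1}$, which you rightly defer to a final retracing of the Young splits.
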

The condition  $\nabla \cdot W^L \in L^2_{loc}(\mR^n)$ is only necessary so that the right hand side of \eqref{eq:Lrange_est} is well defined.

\begin{Remark} \label{remark_weight_motivation}
In \cite[Section 6]{RodnianskiTao}, a similar estimate is proved by using   a commutator argument
 with the weight $|x| - \br{x}^{2-2\delta}$. To avoid problems at the origin   one can use the analogous smooth weight $ \phi = \br{x} - \br{x}^{2-2\delta}$. This weight has a positive Hessian and satisfies $\Delta^2 \phi > 0$ for $n \geq 3$, which leads to a satisfactory  estimate for the long range perturbation. The same estimate can be obtained in dimension $n=2$ using that for this choice of weight one has $|\Delta^2 \phi| \le C\br{x}^{-3}$, so that the bilaplacian term in \eqref{positive_commutator_identity_first} can be controlled appropriately. 
 
 Unfortunately, these weights do not allow one to later  absorb a large short range $W^S$ (unless $W^S$ is continuous and $\nabla \cdot W^S$ is short range). To deal with the general case we want the Hessian of $\phi$ to be as large as needed. This motivates the choice of a weight with a large parameter $\tau$. The exponential weight $\phi = \br{x}e^{\tau \psi}$   works satisfactorily since it has a large positive Hessian, as shown in Lemma \ref{lemma_properties_wheight0}.  The main difference with \cite[Section 6]{RodnianskiTao} is that now this choice leads naturally to the Carleman type estimate \eqref{eq:Lrange_est}. This kind of estimate is in line with the results in \cite{Vodev}. We have chosen $\psi$ to be bounded above and below so that the exponentials can be removed later from the estimates.

 Therefore, thanks to the choice of our weight, we have the factor $\tau^{-1}$ in the right hand side of  \eqref{eq:Lrange_est}. As we have already mentioned, this will be helpful in dealing with a general short range magnetic potential. On the other hand, the dependence  on  $\tau$ of the error term on the right hand side  is not relevant because of the factor $\Im(z)$. Once the short range potentials will  be introduced, we will fix the value of $\tau$, and then  the factor $\Im(z)$  will lead to an estimate for the error term involving the whole magnetic operator, by combining Young's inequality with the identity:
 \[
 \Im(z)\norm{u}^2=-\Im ((H-z)u, u).
 \]
 \end{Remark}
 
 \section{The Long Range estimate} \label{sec:longrange}
 
 We now prove Lemma \ref{lemma_longrange_resolvent_carleman} using the commutator method introduced in Section \ref{sec:commutator}. For the proof of this estimate, it will be useful  to state the following lemma that we will prove at the end of this section.
\begin{Lemma} \label{lemma_positive_commutator_fourth}
 Let $M:= \tau^3 C(n,\delta)( \norm{\br{x}^{1+\sigma} \partial_r W^L}_{L^\infty} + \norm{ \br{x}^\sigma W^L}_{L^{\infty}}) $. Then for any $u \in C^{\infty}_c(\mR^n)$ we have
\begin{equation} \label{eq:comm_long}
 \abs{i([L,A]u, u)}  \leq   \norm{e^{\frac{1}{2}\tau \psi}  w_1 \nabla u}^2 +  \left ( M^2+2 \right )\norm{e^{\frac{1}{2}\tau \psi}  w_1  u}^2.
\end{equation}
\end{Lemma}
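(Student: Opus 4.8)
The plan is to expand the commutator $i[L,A]$ explicitly using the definitions $L = 2W^L\cdot D + (D\cdot W^L)$ and $A = 2\p_j\varphi D_j - i\Delta\varphi$, and then to estimate each resulting term against the two quantities on the right of \eqref{eq:comm_long}, using the weight bounds \eqref{eq:wheight_derivatives} from Lemma \ref{lemma_properties_wheight0} and the long range hypotheses \eqref{longrange2} on $W^L$. First I would write $i[L,A] = i[2W^L\cdot D, A] + i[(D\cdot W^L), A]$. The second piece is the commutator of a multiplication operator with a first order operator; since $(D\cdot W^L)$ multiplies by the (locally $L^2$) function $-i\nabla\cdot W^L$, the commutator $[(D\cdot W^L), 2\p_j\varphi D_j]$ produces only a first order term $2\p_j\varphi\,\p_j(\nabla\cdot W^L)$-type expression — but to avoid any derivative on $W^L$ beyond first order I would instead keep $(D\cdot W^L)$ paired against $A$ in a symmetric way and integrate by parts, exploiting that $A$ and its formal adjoint differ in a controlled manner. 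The cleaner route: compute $([L,A]u,u) = (LAu,u) - (ALu,u) = (Au, L^*u) - (Lu, A^*u)$ and use $L^* = L$ (since $W^L$ is real, $L$ is symmetric on $C^\infty_c$), together with the explicit form of $A - A^*$, so that no second derivatives of $W^L$ ever appear.

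The main work is then bounding the principal (first-order-coefficient) part. The leading term comes from the symbol-level Poisson bracket: the part of $i[2W^L\cdot D, 2\p_j\varphi D_j]$ that is second order in $D$ has coefficient involving $W^L_k \p_{jk}\varphi$ and $(\p_k W^L_j)\p_j\varphi$; contracting with $Du$ these give terms like $(W^L\cdot\nabla\varphi)|\partial_r$-derivatives. Using $|\nabla\varphi| \le C\tau\br{x}e^{\tau\psi}$ and $|W^L| \le C\br{x}^{-\sigma}$, such a term is bounded by $C\tau\br{x}^{1-\sigma}e^{\tau\psi}|\nabla u|^2$, which (since $\br{x}^{1-\sigma}e^{\tau\psi} \le C\br{x}^{-2\delta}e^{\tau\psi}\cdot\br{x}^{1-\sigma+2\delta}$ — wait, one needs $\br{x}^{-\sigma}$ against the weight $w_1^2 = \br{x}^{-2\delta}$ to close) must be reorganized: the gradient terms should be absorbed into $\norm{e^{\frac12\tau\psi}w_1\nabla u}^2$ with coefficient $\le 1$, which forces using the smallness of $\br{x}^{-\sigma}$ for large $\br{x}$ and bounding the near-origin contribution into the $u$-term; this is where the constant $M$ and the factor $\tau^3$ enter, absorbing $\tau$-powers from $\nabla\varphi$, $\Delta\varphi$ and the relation between $e^{\frac12\tau\psi}$ factors. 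The derivative-of-coefficient terms, with $\p_r W^L$, use $|\p_r W^L| \le C\br{x}^{-1-\sigma}$ and similarly land in the $u$-term after one integration by parts to move a derivative off $u$; here $\br{x}^{-1-\sigma}$ beats $\br{x}^{-2\delta}$ only marginally, so again one splits into $\br{x}$ large (coefficient small) versus $\br{x}$ bounded (finite, goes into $M^2$).

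I expect the main obstacle to be \emph{the bookkeeping of the cross terms} $\p_j\varphi\,\p_k W^L_j D_k u \cdot \bar u$ and the terms mixing $\Delta\varphi$ with $W^L$: each must be split so that anything quadratic in $\nabla u$ gets coefficient at most $1$ (matching the bare $\norm{e^{\frac12\tau\psi}w_1\nabla u}^2$ on the right, with no $\tau$ or $M$ in front), while all the $\tau$-dependence and all the $L^\infty$-norms of $W^L$ get shunted into the $M^2+2$ coefficient of the $u$-term. Concretely I would: (i) isolate the one term whose $\nabla u$-coefficient cannot be made small and bound it by $\norm{e^{\frac12\tau\psi}w_1\nabla u}^2$ exactly (this is why the right side has a clean $1$); (ii) for every other $\nabla u$-bearing term, use Cauchy–Schwarz/Young to trade $|\nabla u|^2$ for $\epsilon|\nabla u|^2 + \epsilon^{-1}|u|^2$ with $\epsilon$ chosen so the $\nabla u$ pieces sum to at most a further negligible amount already dominated, and the $|u|^2$ pieces feed $M^2$; (iii) collect near-origin and large-$\br{x}$ regimes for each coefficient $\br{x}^{-\sigma}$, $\br{x}^{-1-\sigma}$ versus the target weight $w_1^2 = \br{x}^{-2\delta}$. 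The exponential factors $e^{\tau\psi}$ cancel throughout since each term is homogeneous of degree one in $e^{\tau\psi}$ after accounting for $A$ carrying one factor and splitting $e^{\tau\psi} = e^{\frac12\tau\psi}e^{\frac12\tau\psi}$. Once all terms are sorted, \eqref{eq:comm_long} follows by summing the finitely many contributions.
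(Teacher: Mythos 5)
The paper's proof hinges on two structural facts that your sketch misidentifies, and the fix you propose as a consequence would not work.

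First, the commutator $i[L,A]$ is a genuinely \emph{first order} operator: the pieces quadratic in $D$ cancel exactly because $D_jD_k = D_kD_j$, so what remains has the form
\[
i[L,A]u = 4\bigl(\varphi'' W^L - (\nabla W^L)\nabla\varphi\bigr)\cdot Du + 2i\bigl(\nabla\varphi\cdot\nabla(\nabla\cdot W^L) - W^L\cdot\nabla\Delta\varphi\bigr)u.
\]
The coefficient of the $\cdot\,Du$ part therefore involves the \emph{Hessian} $\varphi''$ (size $\tau^2\br{x}^{-1}e^{\tau\psi}$ by \eqref{eq:wheight_derivatives}) and the first derivatives of $W^L$, \emph{not} $\nabla\varphi$ (size $\tau\br{x}e^{\tau\psi}$). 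Your estimate ``$C\tau\br{x}^{1-\sigma}e^{\tau\psi}|\nabla u|^2$'' is off: it should be $C\tau^2\br{x}^{-1-\sigma}e^{\tau\psi}|\nabla u||u|$. This matters, because $\br{x}^{-1-\sigma}\le \br{x}^{-2\delta}$ pointwise once $\delta$ is taken close to $\tfrac12$ (as the paper does, WLOG $2\delta-1<\sigma$), so the bound closes by a single Cauchy--Schwarz with \emph{no} spatial splitting. Your proposed near/far-origin decomposition cannot repair the $\br{x}^{1-\sigma}$ bound: for $\sigma$ small this grows at infinity while the target weight $w_1^2=\br{x}^{-2\delta}$ decays, so the ``large $\br{x}$'' regime would not have any smallness to exploit.

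Second, the term $(\nabla W^L)\nabla\varphi$ only makes sense under the hypotheses because $\varphi$ is radial: $(\nabla W^L)\nabla\varphi = (\partial_r\varphi)\,\partial_r W^L$, so only the \emph{radial} derivative of $W^L$ appears, which is exactly what \eqref{longrange2} controls. You mention ``$\partial_r$-derivatives'' in passing, but do not explain why no tangential derivatives of $W^L$ (which are not assumed to decay) can enter; without invoking radiality explicitly the estimate would not be justified. Your alternative route of avoiding second derivatives of $W^L$ by working with $\Im(Au,Lu)$ instead of the expanded commutator is fine in principle and parallels the paper's Leibniz-plus-integration-by-parts treatment of the $\nabla\varphi\cdot\nabla(\nabla\cdot W^L)u$ term. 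But to extract the true size of that bilinear form you still have to integrate by parts the antisymmetric part of the coefficient matrix $\partial_j\varphi\,W^L_k$, which brings you back to precisely the radial structure above; as written, your plan stops short of that step and therefore does not establish \eqref{eq:comm_long}.
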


We can now prove estimate \eqref{eq:Lrange_est}. The proof starts from \eqref{positive_commutator_identity_first}. Basically one hopes to be able to bound the terms on the right with an appropriate  norm of $(P + L + V^L-z) u$, or to absorb them in the left hand side using the term $(\phi'' D u,D u)$. We are helped by Lemma \ref{lemma_positive_commutator_second}, which will be used to introduce in the left the ``large" term $\Re(z)\int \eta |u|^2$, for suitable $\eta$, and which has the appropriate dependence of the estimate with $\Re(z)$. In the estimate we need to be careful to follow the dependence of the constants on the large parameters $\tau$ and $\Re(z)$.

\begin{proof}[Proof of Lemma $\ref{lemma_longrange_resolvent_carleman}$]
Notice that if \eqref{eq:Lrange_est} holds for one $\delta$, then it also hold with the same constant for every  $\delta'>\delta$. Therefore, without loss of generality, we consider $1/2<\delta< \min\left\{ (\sigma +1) /2,1 \right\}$.  Then,   Lemmas \ref{lemma_positive_commutator_first}  and \ref{lemma_properties_wheight0} give that 
\begin{multline*}
4  \beta \tau  \norm{ e^{\frac{1}{2}\tau \psi}  w_1 \nabla u}^2  + 4\tau^2 \norm{e^{\frac{1}{2}\tau \psi} w_2 \partial_r u}^2 \leq ( (\Delta^2 \phi) u, u)\\
- 2\Im(Au, (P+L +  V^L -z)u) + 4 \Im(z) (Au,u) - (i[L+  V^L,A]u, u).
\end{multline*}
Now, since $\phi$ is a radial function and $\partial_r \phi \ge 0$,
\[
-(i[  V^L,A]u,u)=  2  (\partial_r \varphi \partial_r  V^L u, u)  \le   2  (\partial_r \varphi (\partial_r  V^L)_+ u, u),
\]
where $(f)_+$ is the nonnegative part of $f$. Hence, using that $w_1^{-2} \le \br{x}^{1+\sigma}$ and that $\partial_r \varphi \le 2\tau e^{\tau\psi}$,  we have
\begin{multline} \label{eq:est1}
4  \beta \tau  \norm{ e^{\frac{1}{2}\tau \psi}  w_1 \nabla u}^2  + 4\tau^2 \norm{e^{\frac{1}{2}\tau \psi} w_2 \partial_r u}^2 \leq ( (\Delta^2 \phi) u, u)  \\
 +|2\Im(Au, (P+L +  V^L -z)u)|  + 4 |\Im(z) (Au,u)| + |(i[L,A]u, u)| \\
 +4\tau \norm{\br{x}^{1+\sigma} (\partial_r  V^L)_+}_{L^\infty}\norm{e^{\frac{1}{2}\tau \psi}w_1 u}^2.
\end{multline}
We now apply  Lemma \ref{lemma_positive_commutator_second} with $ \eta = w_1^2e^{\tau \psi}$. Using that $|\nabla w_1| \le w_1$, $|\Delta w_1^2| \le 2 n w_1^2$, and that the derivatives of $\psi$ are bounded, one can see that $|\Delta(w_1^2e^{\tau \psi})| \le C\tau^2w_1^2e^{\tau \psi} $ for suitable $C$ depending on the dimension. We use  this fact in \eqref{eq:rez_usquared} and  multiply the resulting inequality by $\beta \tau $. This yields
\begin{multline}  \label{eq:est2}
  (2\beta \tau \Re(z)  - C \tau^3) \norm{e^{\frac{1}{2}\tau \psi}w_1 u}^2 \le 2\beta \tau \norm{  e^{\frac{1}{2}\tau \psi}  w_1 \nabla u}^2 \\ +  2 \beta \tau |\Re ( (P-z)u, e^{\tau \psi} w_1^2 u)|.
\end{multline}
Adding estimates \eqref{eq:est1} and \eqref{eq:est2}  and moving two terms to the left hand side, we get
\begin{multline*}
 2 \beta \tau \norm{ e^{\frac{1}{2}\tau \psi}  w_1 \nabla u}^2  + 4\tau^2 \norm{e^{\frac{1}{2}\tau \psi} w_2 \partial_r u}^2 +   \left( 2\beta \tau  \Re(z) -K_1 \right ) \norm{ e^{\frac{1}{2}\tau \psi} w_1 u}^2 \\
  \leq  |( (\Delta^2 \phi) u, u)|  + |(i[L,A]u, u)|  +2|\Im(Au, (P+L+  V^L-z)u)|   \\
   + 2|\Im(z) (Au, u)| +  2 \beta \tau |(  (P-z)u, e^{\tau \psi} w_1^2 u)|,
\end{multline*}
where $K_1 =   C \tau^3 + 4 \tau \norm{\br{x}^{1+\sigma} (\partial_r  V^L)_+}_{L^\infty}$.
Then, Lemma \ref{lemma_positive_commutator_fourth} and estimate \eqref{eq:lap_and_bilap} allow us to absorb the first two terms on the right into the left hand side. Thus
\begin{multline*}
 (2  \beta \tau - 1)  \norm{ e^{\frac{1}{2}\tau \psi}  w_1 \nabla u}^2  + 4\tau^2 \norm{e^{\frac{1}{2}\tau \psi} w_2 \partial_r u}^2 +   \left( 2\beta \tau  \Re(z) - K_2 \right ) \norm{ e^{\frac{1}{2}\tau \psi} w_1 u}^2 \\
  \leq    2\abs{(Au, (P+L+  V^L-z)u)} + 2\abs{\Im(z) (Au, u)}   + 2 \beta \tau | ( (P-z)u, e^{\tau \psi} w_1^2 u)| \, .
\end{multline*}
where $K_2 = K_1 + C\tau^4 + M^2 + 2$. 

Since $ w_1 \leq 1 \leq w_1^{-1}$, the last term on the right hand side satisfies
\begin{multline*}
 2\beta \tau | ( (P-z)u, e^{\tau \psi} w_1^2 u)| \leq  2\beta \tau \norm{w_1^{-1} e^{\frac{1}{2} \tau \psi}(P+L+  V^L-z)u} \norm{e^{\frac{1}{2} \tau \psi}w_1 u} \\ 
 +   \norm{ e^{\frac{1}{2}\tau \psi} w_1 \nabla u}^2  
 + 2\beta\tau(2\beta\tau\norm{W^L}_{L^{\infty}}^2 + (2 + \tau )\norm{W^L}_{L^{\infty}}  +\norm{  V^L}_{L^{\infty}}) \norm{e^{\frac{1}{2}\tau \psi} w_1 u}^2,
\end{multline*}
where we have used Young's inequality and the fact that
\begin{multline*}
|((2W^L \cdot \nabla u + \nabla \cdot W^L u) ,e^{\tau \psi}w_1^2 u) | \\
= | ( W^L u ,\nabla(  e^{\tau \psi}w_1^2  u))| + |(W^L \cdot  \nabla u ,  e^{\tau \psi} w_1^2 u)| \\
\le 2\norm{W^L}_{L^{\infty}}  \norm{ e^{\frac{1}{2}\tau \psi} w_1 \nabla u} \norm{e^{\frac{1}{2}\tau \psi} w_1 u} +  (2 + \tau)\norm{W^L}_{L^{\infty}} \norm{e^{\frac{1}{2}\tau \psi} w_1 u}^2 ,
\end{multline*}
which follows by integration by parts and taking into account the inequality $|\nabla  (e^{\tau \psi} w_1^2)| \le (2 + \tau) e^{\tau \psi} w_1^2 $.
This and Young's inequality  lead to the estimate 
\begin{multline}
 (2  \beta  \tau -2)  \norm{ e^{\frac{1}{2}\tau \psi}  w_1 \nabla u}^2  + 4\tau^2 \norm{e^{\frac{1}{2}\tau \psi} w_2 \partial_r u}^2 +   \left( 2\beta \tau  \Re(z) - K_3 \right ) \norm{ e^{\frac{1}{2}\tau \psi} w_1 u}^2 \\
  \leq   \norm{w_1^{-1} e^{\frac{1}{2} \tau \psi}(P+L+  V^L-z)u}^2 +  2\abs{\Im(z) (Au, u)} \\
   +   2\abs{(Au, (P+L+  V^L-z)u)}, \label{commutator_intermediate}
\end{multline}
where  now $K_3 = K_2 + 2\beta\tau(2\beta\tau\norm{W^L}_{L^{\infty}}^2 + (2 + \tau )\norm{W^L}_{L^{\infty}}  +\norm{  V^L}_{L^{\infty}} ) + \beta^2 \tau^2 $.

Again, we estimate the last term on the right of \eqref{commutator_intermediate}. We have that
\begin{align*}
 \abs{ w_1 A u} &\le 2 w_1 e^{\tau \psi } (r\br{r}^{-1} +  \tau \br{r} \psi_0'(r)) |\partial_r u| +  w_1 |\Delta \phi ||u| \\
 &\le 2e^{\tau \psi } (w_1 +  \tau w_2)  |\partial_r u| +  C\tau^2 e^{\tau \psi }\abs{w_1 u} 
\end{align*}
where we have used \eqref{eq:lap_and_bilap} and that $w_1^2\br{r} =\br{r}^{1-2\delta}\le 1 $ (recall that $w_2^2 = \br{r}\psi_0'(r)^2$). Then, we can apply several times Young's inequality with  suitable values of $\eps$, to obtain   
\begin{multline*}
 2\abs{(w_1Au, w_1^{-1}(P+L+  V^L-z)u)} \le  4\norm{ e^{\frac{1}{2}\tau \psi } w_1 \nabla u}^2 + 4 \tau^2\norm{ e^{\frac{1}{2}\tau \psi } w_2 \partial_r u}^2\\
 + C^2\tau^4 \norm{ e^{\frac{1}{2}\tau \psi } w_1  u}^2 + 8 \norm{w_1^{-1} e^{\frac{1}{2} \tau \psi}(P+L+  V^L-z)u}^2.
 \end{multline*}
 With these choices, returning to the estimate \eqref{commutator_intermediate},  the $4 \tau^2\norm{ e^{\frac{1}{2}\tau \psi } w_2 \partial_r u}^2$ terms can be cancelled out. This yields 
\begin{multline} \label{eq:step}
  ( 2  \beta \tau -6 )  \norm{ e^{\frac{1}{2}\tau \psi}  w_1 \nabla u}^2  +   \left( 2\beta \tau \Re(z) - K_3-C^2\tau^4 \right ) \norm{ e^{\frac{1}{2}\tau \psi} w_1 u}^2 \\
  \leq   16 \norm{w_1^{-1} e^{\frac{1}{2} \tau \psi}(P+L+  V^L-z)u}^2  +  2\abs{\Im(z) (Au, u)}.
\end{multline}

In the estimate \eqref{eq:step}, the last term on the right hand side still needs to be controlled. This term cannot be absorbed directly in the left hand side, since $|\nabla \phi|$ does not have any decay at infinity. We will just estimate this term in such a way that it yields the last term on the right of \eqref{eq:Lrange_est}. Using that $\abs{\nabla \varphi} \leq  (1+\tau)e^{\tau}: =a(\tau)$, Young's inequality yields that
\begin{multline} \label{im_au_u_estimate}
 \abs{\Im(z) (Au, u)} = \abs{\Im(z) ( (\nabla  \varphi \cdot D u, u) + (u, \nabla  \varphi \cdot D u))} \\
 \leq 2 a(\tau) \abs{\Im(z)}  \norm{\nabla u} \norm{u} 
\le  \abs{\Im(z)} \left ( \frac{1}{ \Re(z)^{1/2}} \norm{\nabla u}^2 +   a(\tau)^2\Re(z)^{1/2} \norm{u}^2 \right ) .
\end{multline} 
We now estimate $ \norm{\nabla u}$ as follows:
\[
\norm{\nabla u}^2 = (Pu, u) = ((P-z)u, u) + \Re(z) \norm{u}^2 + i \,\Im(z) \norm{u}^2.
\]
Taking the real part and adding and subtracting the long range potentials  leads to
\begin{align*}
\norm{\nabla u}^2 &= \Re((P-z)u, u) + \Re(z) \norm{u}^2 \\
 &= \Re((P+L+  V^L-z)u, u) - ((L+  V^L)u, u) + \Re(z) \norm{u}^2.
\end{align*}
Integrating by parts in the term  $((\nabla \cdot W^L) u,u)$, as we did previously, gives
\[ |((L+  V^L)u, u)| \le  |(W^L u,\nabla u)| + |(W^L \cdot \nabla u, u)| + \norm{V^L}_{L^\infty}\norm{u}^2 ,\]
and hence, using Young's inequality and taking $\Re(z)> 2\norm{W^L}_{L^{\infty}}^2  +\norm{  V^L}_{L^{\infty}}$, yields 
\begin{multline*}
\frac{1}{2} \norm{\nabla u}^2 \leq  \abs{((P+L+  V^L-z)u, u)} \\  
+ ( 2\norm{W^L}_{L^{\infty}}^2  +\norm{  V^L}_{L^{\infty}} + \Re(z)) \norm{u}^2 \\
\le   \abs{((P+L+  V^L-z)u, u)}   
   + 2 \Re(z) \norm{u}^2.
\end{multline*}
Inserting this in \eqref{im_au_u_estimate} and  using that  $|\Im(z)| \le 1 \le \Re(z)$  we get that
\begin{equation*}
 \abs{\Im(z) (Au, u)}  
\le   2 \abs{((P+L+  V^L-z)u, u)}   \\
   + C(\tau)|\Im(z)| \Re(z)^{1/2}  \norm{u}^2.
\end{equation*}

Therefore, returning to \eqref{eq:step} and using the previous fact together with $e^{\frac{1}{2}\tau \psi}  \ge 1$, the resulting estimate is 
\begin{multline*} 
 ( 2  \beta \tau -6 )   \norm{ e^{\frac{1}{2}\tau \psi}  w_1 \nabla u}^2  +   \left(  2\beta \tau \Re(z) - K_4 \right ) \norm{ e^{\frac{1}{2}\tau \psi} w_1 u}^2 \\
  \leq   32 \norm{w_1^{-1} e^{\frac{1}{2} \tau \psi}(P+L+  V^L-z)u}^2  + C(\tau) |\Im(z)| \Re(z)^{1/2}  \norm{u}^2, 
\end{multline*}
where $ K_4= K_3 +C^2\tau^4 +1$. 
To finish, we use the fact that
\[
\nabla(e^{\frac{1}{2}\tau \psi}  u) = e^{\frac{1}{2}\tau \psi} \nabla u + \frac{1}{2}\tau e^{\frac{1}{2}\tau \psi}(\nabla \psi) u,
\]
in the first term on the left, and we fix $v=  e^{\frac{1}{2}\tau \psi} u$. Then, since $|\nabla \psi| \le 1$, we obtain that 
\[
\norm{w_1 \nabla v}^2 \leq 2 \norm{e^{\frac{1}{2}\tau \psi} w_1 \nabla u}^2 + \tau^2 \norm{e^{\frac{1}{2}\tau \psi} w_1 u}^2,
\]
and consequently
\begin{multline*} 
 ( \beta \tau-3) \norm{   w_1 \nabla v}^2  +   (  2\beta \tau \Re(z) - K_4 -(\beta \tau - 3) \tau^2) \norm{  w_1 v}^2 \\
  \leq   32  \norm{w_1^{-1} e^{\frac{1}{2} \tau \psi}(P+L+  V^L-z) e^{-\frac{1}{2} \tau \psi} v}^2 + C(\tau) |\Im(z)| \Re(z)^{1/2}  \norm{v}^2, 
\end{multline*}
where in the last term we have used again that $e^{\frac{1}{2}\tau \psi}\ge 1$. We choose now $\tau>6 \beta^{-1}$, so that $\beta \tau - 3 \geq \frac{1}{2}  \beta \tau$, and $\Re(z) > \tau^{-1} \beta^{-1}(K_4 + (\beta \tau - 3) \tau^2)$. This yields
\begin{multline*} 
 \norm{   w_1 \nabla v}^2  +    \Re(z) \norm{  w_1 v}^2 \\
  \leq   \frac{64}{\beta \tau}  \norm{w_1^{-1} e^{\frac{1}{2} \tau \psi}(P+L+  V^L-z) e^{-\frac{1}{2} \tau \psi} v}^2 + C(\tau,\beta) |\Im(z)| \Re(z)^{1/2}  \norm{v}^2, 
\end{multline*}
 which  proves the claim.
\end{proof}

Finally, we prove the estimate \eqref{eq:comm_long}.  Here the long range conditions on the potentials play an essential role.

\begin{proof}[Proof of Lemma \ref{lemma_positive_commutator_fourth}]
A direct computation shows that 
\begin{multline*}
i[L,A]u = 4 (\phi'' W^L - (\nabla W^L) \nabla \phi) \cdot Du  
+ 2i(\nabla \varphi \cdot \nabla  (\nabla \cdot W^L) - W^L \cdot (\nabla \Delta \phi))u
 \end{multline*}
where $\nabla W^L$ is the Jacobian matrix of $W^L$. We are going to use \eqref{eq:wheight_derivatives} several times.
 We begin by studying the first and last terms. Since $2\delta-1 < \sigma$, we get
\begin{multline}  \label{eq:LA1}
|4(\phi'' W^L \cdot D u,u) - 2i(W^L \cdot (\nabla \Delta \phi) u,u)| \\ 
\le C \tau^{3} \norm{ \br{x}^\sigma W^L}_{L^{\infty}} (\norm{e^{\frac{1}{2}\tau \psi} w_1 \nabla u} \norm{e^{\frac{1}{2}\tau \psi} w_1 u} + \norm{e^{\frac{1}{2}\tau \psi} w_1 u}^2).
\end{multline}
Also, since $\phi$ is radial, $\nabla W^L(\nabla \phi) = (\partial_r \phi) \partial_r W^L  $, which implies
\begin{equation}
 |( (\nabla W^L) \nabla \phi \cdot \nabla u ,u) | \le   C\tau \norm{\br{x}^{1+\sigma} \partial_r W^L}_{L^\infty}  \norm{e^{\frac{1}{2}\tau \psi} w_1 \nabla u} \norm{ e^{\frac{1}{2}\tau \psi}w_1 u}.
\end{equation}
Let us estimate the remaining term. By the Leibniz rule  we have that
\begin{multline*}
    \nabla \varphi \cdot \nabla (\nabla \cdot W^L) u = \nabla \cdot(( \nabla  W^L) (\nabla \phi) u)\\  - \nabla W^L (\nabla \phi) \cdot \nabla u  - (\partial_j \partial_k \phi) \partial_kW^L_j  u.
\end{multline*}
We expand again the last term, so that we only have terms with radial derivatives of the magnetic potential, 
\[
    (\partial_j \partial_k \phi) \partial_kW^L_j  u = \nabla \cdot (\phi'' (W^L) u) - (\nabla \Delta \phi) W^L u - \phi''(W^L) \cdot \nabla u,
\]
where recall that we denote the Hessian of $\phi$ by $\phi''$. Hence, integrating by parts the first term on the right hand side of  the previous two expressions  yields  
\begin{multline*}
    |((\nabla \varphi \cdot \nabla ( \nabla \cdot W^L) u,u)| \le  
   C \tau^{3} (\norm{\br{x}^{1+\sigma} \partial_r W^L}_{L^\infty}  +  \norm{ \br{x}^\sigma W^L}_{L^{\infty}} )  \times \dots \\
    (\norm{e^{\frac{1}{2}\tau \psi} w_1 \nabla u} \norm{ e^{\frac{1}{2}\tau \psi} w_1u} + \norm{ e^{\frac{1}{2}\tau \psi} w_1 u}^2).
\end{multline*} 
Then, writing $M:= C \tau^3  ( \norm{\br{x}^{1+\sigma} \partial_r W^L}_{L^\infty} + \norm{ \br{x}^\sigma W^L}_{L^{\infty}}) $ for an appropriate  constant $C= C(n,\delta)$, one has 
\begin{align*}
 \abs{i([L,A]u, u)} &\leq 2M(\norm{ e^{\frac{1}{2}\tau \psi} w_1 \nabla u} \norm{ e^{\frac{1}{2}\tau \psi} w_1u} + \norm{e^{\frac{1}{2}\tau \psi} w_1 u}^2) \\
 &\leq  \norm{ e^{\frac{1}{2}\tau \psi} w_1 \nabla u}^2 +  \left ( M^2+2 \right )\norm{ e^{\frac{1}{2}\tau \psi} w_1 u}^2. \qedhere
\end{align*}

\end{proof}

\section{Shifting the long range estimate to $H^{-1}$} \label{sec:shift}

Even if we have the help of the large parameters $\tau$ and $\Re(z)$ in  \eqref{eq:Lrange_est}, we cannot introduce directly the  short range potentials in the right hand side. This is due to the fact that $\nabla \cdot W^S$ is not necessarily an $L^\infty(\mR^n)$ function under the  condition \eqref{shortrange1} assumed on the potentials. That is, the short range perturbation is not bounded as an operator from $H^1$ to $L^2$. However, it is bounded from $H^1$ to $H^{-1}$ as it was pointed out in \cite{Vodev}.  To overcome this difficulty, we are going to derive a better version of estimate \eqref{eq:Lrange_est}, now from $H^1$ to $H^{-1}$. Of course, this is not necessary when assuming the extra condition \eqref{shortrange2}. In this case, we will just  show  by analogous methods that \eqref{eq:Lrange_est} can be improved to an estimate from  $H^2$ to $L^2$.

From now on, just to simplify notation,  we switch to the conventions of semiclassical analysis.
\begin{Definition}
Let $k$ be a nonnegative integer. We define the space $H^{k}_{scl}(\mR^n):=H^{k}_{scl}$ as the $H^k(\mathbb{R}^n)$-Sobolev space with semiclassical parameter $h>0$, equipped with the norm
\[
\left \| u \right \|_{H^{k}_{scl}}^2 = \sum_{\left| \alpha \right|\leq k} \norm{h^{\left| \alpha \right|}\partial^\alpha u}^2.
\]
We also consider its dual space $H^{-k}_{scl}(\mathbb{R}^n):=H^{-k}_{scl}$ with norm given by 
\begin{equation*}\label{cutoff1}
\left \| u \right \|_{H^{-k}_{scl}}=\underset{ \vartheta  \, \in\, C^{\infty}_0(\mathbb{R}^n)\setminus \left\{ 0 \right\}}{\sup} \dfrac{\left | \left \langle u, \vartheta  \right \rangle_{\mathbb{R}^n} \right |}{\left \| \vartheta  \right \|_{H^{k}_{scl}}},
\end{equation*}
where $\left \langle \, \cdot \,, \,\cdot \, \right \rangle_{\mathbb{R}^n}$ denotes the distribution duality in $\mathbb{R}^n$. 
\end{Definition}
In our estimates, the semiclassical structure emerges naturally by taking $h= \Re(z)^{-1/2}$, so that we now have $z=h^{-2}+i \Im (z)$.
Also, let us write  $w(x) =  w_1(x) = \br{x}^{-\delta}$.  Under this framework, \eqref{eq:Lrange_est} can be written as follows:
\begin{equation}\label{eq:Lrange_est_semicl}
\norm{w v }_{H^1_{scl}}^2  \le   64 \beta^{-1} \tau^{-1} h^{2} \norm{w^{-1} e^{\frac{1}{2}\tau \psi} (P+L+V^L-z) e^{-\frac{1}{2}\tau \psi} v}^2  + bh\norm{v}^2,
\end{equation}
where  $b = C(\beta, \tau) |\Im(z)|$. Now we want to prove the following proposition.
\begin{Proposition} \label{prop_loongrange_est_shifted}
Assume that all the conditions in the statement of Lemma  \ref{lemma_longrange_resolvent_carleman} hold, and that  $\nabla \cdot W^L \in L^\infty(\mR^n)$.  Then, for any $v \in C^{\infty}_c(\mR^n)$, 
\begin{multline}\label{eq:Lrange_est_H-1}
 \norm{wv}_{H_{scl}^{1 +a}}^2 \le  C bh\norm{ v}_{H^{-1 +a}_{scl}}^2  \\ 
 +  C \beta^{-1} \tau^{-1}  h^2 \norm{w^{-1 } e^{\frac{1}{2}\tau \psi} (P+L+V^L-z) e^{-\frac{1}{2}\tau \psi} v}_{H^{-1 +a}_{scl}}^2   ,
\end{multline}
whenever $z \in \mC$ with $\abs{\Im(z)} \leq 1$, $ h <  h_0(n,\delta,\tau, W^L,   V^L) < 1$ and $a=0,1$. Here $C$ is an absolute constant.
\end{Proposition}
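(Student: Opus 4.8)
The plan is to upgrade the long range estimate \eqref{eq:Lrange_est_semicl} from an $L^2$-based estimate ($a=0$ with Sobolev index $1$ on the left, $0$ on the right) to the shifted version \eqref{eq:Lrange_est_H-1} by inserting a well-chosen smoothing/weight operator and commuting it past $P+L+V^L-z$. Concretely, for $a=0$ one wants to replace $v$ on the right-hand side of \eqref{eq:Lrange_est_semicl} by something measured in $H^{-1}_{scl}$. The natural device is a semiclassical pseudodifferential operator of order $-1$, say $T = \langle hD \rangle^{-1}$ (or an operator with symbol $\langle h\xi\rangle^{-1}$ suitably modified near infinity so it preserves the weighted structure), and to apply \eqref{eq:Lrange_est_semicl} to $v$ replaced by $Tv$ — or rather to the conjugated object — then undo the conjugation. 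Since $T$ is order $-1$, $\|wTv\|_{H^1_{scl}} \gtrsim \|wv\|_{L^2}$ roughly speaking, and $\|Tv\| \sim \|v\|_{H^{-1}_{scl}}$, while the right-hand side forcing term becomes $\|w^{-1}e^{\tau\psi/2}(P+L+V^L-z)e^{-\tau\psi/2} Tv\|$. The point of the proposition is exactly that this last quantity is comparable to $\|(P+L+V^L-z)(\cdots)v\|_{H^{-1}_{scl}}$ up to commutator errors.

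First I would make precise the operator $T$ and record the elementary semiclassical mapping properties: $T: H^s_{scl} \to H^{s+1}_{scl}$ boundedly with norm $O(1)$ in $h$, and $T$ commutes with $\langle x\rangle^{\pm\delta}$ up to an error of order $h\langle x\rangle^{-\delta-1}$ (this is where one needs $\delta$-weights behave well under $\langle hD\rangle^{-1}$, a Coifman–Meyer / Calderón–Vaillancourt type statement, or simply an explicit computation with the kernel). Second, I would commute $T$ through $Q:=e^{\tau\psi/2}(P+L+V^L-z)e^{-\tau\psi/2}$. Writing $Q T v = T Q v + [Q,T]v$, the commutator $[Q,T]$ is, at leading order, of one order lower than $Q T$ because $Q$ is a second-order semiclassical operator with bounded coefficients (the conjugation by $e^{\tau\psi/2}$ only adds bounded first-order terms since $|\nabla\psi|\le 1$ and $\psi$ is bounded, and $L+V^L$ contributes bounded-coefficient terms); hence $\|w^{-1}[Q,T]v\| \lesssim h \|w^{-1} \cdot (\text{lower order}) v\|$, which can be absorbed either into the left-hand side $\|wv\|_{H^1_{scl}}$ term (after noting $w^{-1}$ vs $w$ discrepancy is handled because the commutator gains decay from $\nabla\psi$, $\nabla$ of weights — this is exactly why $\nabla\cdot W^L\in L^\infty$ is newly assumed, so that $L$ genuinely maps $H^1\to L^2$ boundedly and its commutator with $T$ is controlled) or into the forcing term. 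The $a=1$ case is handled identically by instead inserting $T^2 = \langle hD\rangle^{-1}$... no — by inserting an operator of order $-1$ but now starting from the already-known $a=0$ shifted estimate, or more symmetrically by applying the same argument with $\langle hD\rangle^{-1}$ replaced appropriately; one bootstraps: \eqref{eq:Lrange_est_semicl} $\Rightarrow$ \eqref{eq:Lrange_est_H-1} with $a=0$ $\Rightarrow$ \eqref{eq:Lrange_est_H-1} with $a=1$, each step inserting one more order of smoothing.

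The main obstacle I expect is bookkeeping the weights through the commutators: $T$ does not commute with multiplication by $\langle x\rangle^{\pm\delta}$ nor with the exponential conjugation, and one must verify that every commutator error carries both an extra power of $h$ \emph{and} enough spatial decay to be absorbed by the $\|wv\|_{H^1_{scl}}^2$ term on the left (which has the weight $w=\langle x\rangle^{-\delta}$, not $w^{-1}$) rather than producing an uncontrolled $\|w^{-1}v\|$ on the right. This is delicate precisely because $w^{-1}$ grows; the saving is that $[T,\langle x\rangle^{\delta}]$ has symbol with a derivative of $\langle x\rangle^{\delta}$, gaining the factor $\langle x\rangle^{-1}$, i.e. effectively $h\langle x\rangle^{\delta-1}$, and combined with the decay already present in $L+V^L$ (now including $\nabla\cdot W^L\in L^\infty$ so no loss there) and in $\nabla\psi$, all error terms close up for $h<h_0$ small enough. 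A clean way to organize this is to prove a single lemma: for $Q$ as above and $T=\langle hD\rangle^{-1}$, one has $\|w Q T v\| \ge \tfrac12\|w^{-1}QTv\|... $ — rather, $\|w^{-1} Q T v - w^{-1} T Q v\| \le C h \big(\|wv\|_{H^1_{scl}} + h\|w^{-1}Qv\|_{H^{-1}_{scl}}\big)$, which after substitution into \eqref{eq:Lrange_est_semicl} applied to $Tv$ (plus the trivial bounds $\|wTv\|_{H^1_{scl}}\ge c\|wv\| $, refined to recover the full $H^1_{scl}$ norm of $wv$ by also controlling $\|w\nabla v\|$ via an integration by parts against $Tv$) yields \eqref{eq:Lrange_est_H-1} after absorbing the $Ch(\cdots)$ terms for $h$ small.
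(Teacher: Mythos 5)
Your plan---insert a smoothing operator of order $-1$ and commute it through the conjugated operator---is the right general idea, but with $T=\br{hD}^{-1}$ it runs into an obstruction that your proposal does not resolve, and which is precisely why the paper (following Vodev) commutes with a different operator. The gap is in the claim that $[Q,T]$ gains a factor of $h$ because ``$Q$ is a second-order semiclassical operator with bounded coefficients.'' Boundedness of the coefficients is not enough: gaining $h$ from $[b,\br{hD}^{-1}]$ requires a Lipschitz bound on $b$ in \emph{all} directions, whereas the standing hypotheses here (those of Lemma \ref{lemma_longrange_resolvent_carleman}, i.e.\ \eqref{longrange2}) control only the \emph{radial} derivatives of $W^L$ and $V^L$, and the extra assumption $\nabla\cdot W^L\in L^\infty$ controls only the divergence, not the full Jacobian. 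Consequently $[2W^L\cdot D,T]=2[W^L,T]\cdot D$ is merely $O(1)$ on $L^2$ composed with a full gradient, so after multiplying by the $h^2$ in front of the forcing term you are left with an error of size $\norm{w^{-1}h\nabla v}^2$ (growing weight $w^{-1}$, no small factor), which cannot be absorbed into $\norm{wv}_{H^1_{scl}}^2$; the decay $|W^L|\le C\br{x}^{-\sigma}$ does not save you since $\sigma$ may be smaller than $\delta$. (In the paper, $\nabla\cdot W^L\in L^\infty$ is used for a different purpose: to make the zeroth-order term of $Q_{h,\tau}$ a bounded multiplication, needed for the $a=1$ mapping property.) A second, structural gap: applying \eqref{eq:Lrange_est_semicl} to $Tv$ only controls $\norm{wTv}_{H^1_{scl}}\approx\norm{wv}_{L^2}$, not the full left-hand side $\norm{wv}_{H^{1+a}_{scl}}$ of \eqref{eq:Lrange_est_H-1}; your proposed fix (``integration by parts against $Tv$'') is exactly where the magnetic first-order terms and the weight mismatch reappear, and it is not carried out. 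There is also a minor unaddressed point: \eqref{eq:Lrange_est_semicl} is proved for $C^\infty_c$ functions, and $Tv$ is not compactly supported, so a density step is needed.

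The paper's proof avoids all commutators with the operator by choosing the smoothing operator to be the resolvent $(G_{h,\tau}-i)^{-1}$ of the conjugated long-range operator itself (see \eqref{eq:G_def}), which \emph{exactly} commutes with $G_{h,\tau}-h^2z$. One writes the identity \eqref{eq:res_carleman3}, $v=(h^2z-i)(G_{h,\tau}-i)^{-1}v+(G_{h,\tau}-i)^{-1}(G_{h,\tau}-h^2z)v$, applies the Carleman estimate \eqref{eq:Lrange_est_semicl} to the smoother function $(G_{h,\tau}-i)^{-1}v$ (after a density argument), and then invokes Lemma \ref{lemma_shift}, which packages all the weighted two-derivative mapping bounds $\norm{w^{k}(G_{h,\tau}-i)^{-1}w^{-k}u}_{H^{1+a}_{scl}}\le 4\norm{u}_{H^{-1+a}_{scl}}$ for $k=-1,0,1$; that lemma is proved perturbatively from the free resolvent $(h^2P-i)^{-1}$ via a resolvent identity, using only $L^\infty$ bounds on $W^L$, $\nabla\cdot W^L$, $V^L$ and smallness of the first-order part for $h\lesssim\tau^{-1}$---no Lipschitz regularity of the potentials is needed anywhere. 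To repair your argument you would either need to assume the full-gradient long range conditions \eqref{longrange1} (which the paper deliberately avoids at this stage) and still fix the weight bookkeeping and the recovery of the $H^{1+a}$ norm on the left, or switch to the resolvent-commutation device as in the paper.
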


In the remaining part of this section, to simplify further the notation, we put 
\begin{equation} \label{eq:G_def}
G_{h, \tau}:=  h^2 e^{\frac{1}{2}\tau \psi}   (P + L + V^L) e^{-\frac{1}{2}\tau \psi} .
\end{equation}
Recall that $L$ was defined in \eqref{eq:L}. To prove the estimate \eqref{eq:Lrange_est_H-1} in the case $a=0$, instead of commuting with the operator $\br{hD}^{-1}$ to shift estimate \eqref{eq:Lrange_est_semicl} one derivative down, we follow \cite{Vodev} and commute with a resolvent operator (in the case $a=1$ we only need to get an extra one derivative gain in \eqref{eq:Lrange_est_semicl}). In both cases, we need the following result.
\begin{Lemma} \label{lemma_shift} 
Let $h>0$, $\tau\ge 1$,  $k \in \mR$, and $a=0,1$. Consider in \eqref{eq:G_def}  $V^L\in L^\infty({\mathbb{R}^n, \mathbb{R}})$, $W^L\in L^\infty(\mathbb{R}^n, \mathbb{R}^n)$ with $\nabla \cdot W^L \in L^\infty(\mR^n)$, and $\psi \in C^{2}(\mathbb{R}^n)$   independent of $h$ and $\tau$ such that $|\nabla \psi|,|\Delta \psi|<\infty$. Then, for all $u\in H_{scl}^{-1 +a}$ we have that
\begin{equation} \label{eq:G_weights}
 \norm{w^{k}(G_{h, \tau} - i)^{-1 } w^{-k} u}_{H^{1+a}_{scl}} \le 4 \norm{ u  }_{H^{-1 +a}_{scl}} ,
\end{equation}
if $a=0,1$ and $h < c\tau^{-1}$ for a small $c=c(V^L,W^L,\psi,k)$.
\end{Lemma}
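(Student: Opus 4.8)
The plan is to verify that $G_{h,\tau} - i$ is invertible with the stated bound by a direct $L^2$ energy estimate, and then upgrade to the weighted and semiclassical Sobolev statement. First I would treat the model case $k=0$, $a=0$. Write $G_{h,\tau} = h^2 e^{\frac12\tau\psi}(P+L+V^L)e^{-\frac12\tau\psi}$; expanding the conjugation, $G_{h,\tau} = (hD + ih\tau\nabla\psi/2 + hW^L)^2 + h^2 V^L + (\text{lower order in }h)$, or more simply $G_{h,\tau}u = -h^2\Delta u + (\text{first order terms with coefficients }O(h\tau))u + (\text{zeroth order }O(h^2\tau^2))u$. For $f = (G_{h,\tau}-i)u$, pair with $u$ in $L^2$ and take imaginary parts: $\Im(G_{h,\tau}u,u) - \|u\|^2 = \Im(f,u)$. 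The leading term $-h^2\Delta$ contributes $h^2\|\nabla u\|^2$ to the real part and $0$ to the imaginary part (since $\psi, W^L, V^L$ are real, $e^{\pm\frac12\tau\psi}$ conjugation of the self-adjoint part stays real-coefficient; the only imaginary contributions come from the first-order terms, which carry a factor $h\tau$). Thus $\|u\|^2 \le |\Im(f,u)| + Ch\tau(\|hD u\| + \|u\|)\|u\|/h \le \ldots$; combining with the real part pairing $\Re(G_{h,\tau}u,u) = h^2\|\nabla u\|^2 + O(h^2\tau^2)\|u\|^2 + O(h\tau)\|hDu\|\|u\| = \Re(f,u) + \|u\|^2$, and absorbing the cross terms via Young provided $h\tau$ is small, one gets $\|u\|_{H^1_{scl}}^2 = \|u\|^2 + \|hDu\|^2 \le C(\|f\|\|u\| + \|f\|^2)$, hence $\|u\|_{H^1_{scl}} \le 4\|f\|$ once $h < c\tau^{-1}$ with $c$ small depending on $\|W^L\|_{L^\infty}$, $\|V^L\|_{L^\infty}$, $\|\nabla\psi\|_{L^\infty}$. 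Invertibility itself follows since $G_{h,\tau}-i$ is a bounded perturbation of $-h^2\Delta - i$ which is invertible, plus the a priori bound; alternatively note $\Im$ of the quadratic form is bounded so $G_{h,\tau}$ is a bounded perturbation (relatively bounded with small bound for $h\tau$ small) of a self-adjoint operator, giving invertibility away from $\mR$.

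Next I would introduce the weight $w^k = \br{x}^{-k\delta}$. The point is that conjugating $G_{h,\tau}$ by $w^k$ produces $w^k G_{h,\tau} w^{-k} = G_{h,\tau} + h^2 E$, where $E$ is a first-order differential operator whose coefficients involve $\nabla(w^k)/w^k = O(\br{x}^{-1})$ and $\Delta(w^k)/w^k = O(\br{x}^{-1})$, hence bounded, and crucially each term carries an extra factor of $h$ relative to the principal part (it is $h \cdot hD$ or $h^2 \cdot \br{x}^{-1}$). So $w^k(G_{h,\tau}-i)^{-1}w^{-k} = (w^k G_{h,\tau}w^{-k} - i)^{-1} = (G_{h,\tau} + h^2 E - i)^{-1}$, and the same energy estimate as above goes through with $E$ absorbed for $h$ small (now the smallness of $c$ also depends on $k$, consistent with the statement). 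This handles general $k$ with $a=0$.

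Finally, for $a=1$ I would commute with $\br{hD} = (1 - h^2\Delta)^{1/2}$ (or simply with $hD_j$ for each $j$): apply the $a=0$ estimate to $u_j := hD_j u$ (after localizing with a cutoff if one worries about $C^\infty_c$, though the estimate is really about all of $H^{\pm 1}_{scl}$) noting $(G_{h,\tau}-i)(hD_j u) = hD_j f + [\,G_{h,\tau}, hD_j\,]u$, and the commutator $[G_{h,\tau}, hD_j]$ is $h$ times a first-order operator with bounded coefficients (derivatives of $\psi$, $W^L$, $\nabla\cdot W^L$, $V^L$ — here is where $\nabla\cdot W^L \in L^\infty$ and the $C^2$ bound on $\psi$ enter), so $\|[G_{h,\tau},hD_j]u\| \le Ch\tau \|u\|_{H^1_{scl}}$, which after another application of the $a=0$ bound to $u$ itself is absorbed. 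Summing over $j$ and adding the $a=0$ estimate gives the $H^2_{scl}$-to-$L^2$ bound, i.e. \eqref{eq:G_weights} with $a=1$; the constant $4$ is arranged by taking $h\tau$ sufficiently small. The main obstacle is bookkeeping the $h$- and $\tau$-powers so that every error term genuinely carries an extra $h$ (or $h\tau$) and can be absorbed for $h < c\tau^{-1}$ — in particular checking that the conjugation by $e^{\frac12\tau\psi}$ does not spoil the imaginary-part cancellation (it does not, because it only generates real first-order coefficients of size $O(h\tau)$, already accounted for) and that differentiating does not lose the decay needed for the weight commutators.
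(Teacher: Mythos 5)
Your plan has a genuine gap at the step where you pass from $a=0$ to $a=1$ by commuting with $hD_j$. The commutator $[G_{h,\tau},hD_j]$ involves the full Jacobian $\partial_j W^L$, the gradient $\nabla V^L$, and the full Hessian of $\psi$ (through the first-order terms generated by the conjugation), and none of these are controlled under the hypotheses of the lemma: $W^L$ and $V^L$ are merely bounded, only the divergence $\nabla\cdot W^L$ is assumed in $L^\infty$, and $\psi$ is only $C^2$ with $|\nabla\psi|,|\Delta\psi|$ bounded. Even in the setting where the lemma is applied, only the radial derivative $\partial_r W^L$ is controlled (condition \eqref{longrange2}), so the bound $\norm{[G_{h,\tau},hD_j]u}\le Ch\tau\norm{u}_{H^1_{scl}}$ you invoke is not available; replacing $hD_j$ by $\br{hD}$ does not help, since commutators of that multiplier with merely bounded coefficients are not small either. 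A secondary issue: your $a=0$ energy estimate, as written, yields $\norm{u}_{H^1_{scl}}\le 4\norm{(G_{h,\tau}-i)u}_{L^2}$, which is weaker than \eqref{eq:G_weights} (the right-hand side there is the $H^{-1}_{scl}$ norm); to get the stated estimate you would have to run all pairings through the $H^{-1}_{scl}$--$H^{1}_{scl}$ duality, and you would also need to make sense of $(G_{h,\tau}-i)^{-1}$ acting on $H^{-1}_{scl}$ data, which your ``relatively bounded perturbation of a self-adjoint operator'' argument (note $G_{h,\tau}$ is not symmetric after the exponential conjugation) only provides on $L^2$.

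The paper's proof avoids both problems. It absorbs the weight into the exponential by setting $\widetilde\psi=\tau^{-1}k\log w+\frac12\psi$, so that $w^k(G_{h,\tau}-i)w^{-k}=(h^2P-i)+Q_{h,\tau}$ with $Q_{h,\tau}$ the explicit first-order semiclassical operator \eqref{first_order_op}, whose coefficients involve only $\nabla\widetilde\psi$, $\Delta\widetilde\psi$, $W^L$, $\nabla\cdot W^L$, $V^L$ --- no additional derivatives of the coefficients are ever taken. The two-derivative gain (for both $a=0$ and $a=1$ simultaneously) then comes for free from the Fourier multiplier bound $\norm{(h^2P-i)^{-1}v}_{H^{1+a}_{scl}}\le 2\norm{v}_{H^{-1+a}_{scl}}$, and the resolvent identity plus the smallness $\norm{Q_{h,\tau}}_{\mathcal{L}(H^{1+a}_{scl},H^{-1+a}_{scl})}\le\frac14$ for $h<c\tau^{-1}$ gives invertibility on the correct spaces and the constant $4$ in one stroke. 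If you want to keep your energy-method flavor, you would at minimum have to replace the commutation step by an argument that, like the paper's, only asks $Q_{h,\tau}$ to map $H^{1+a}_{scl}$ to $H^{-1+a}_{scl}$ boundedly with small norm.
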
 
This lemma is essentially \cite[Lemma 3.2]{Vodev}. Nonetheless, for the interested reader we give a proof in the appendix.
We now prove estimate \eqref{eq:Lrange_est_H-1}. 
\begin{proof}[Proof of Proposition \ref{prop_loongrange_est_shifted}]
To simplify the computations, throughout this proof we denote $G:=G_{h, \tau}$. Now we can combine \eqref{eq:Lrange_est_semicl} and Lemma \ref{lemma_shift}  to get \eqref{eq:Lrange_est_H-1}, using that the identity
\begin{equation} \label{eq:res_carleman3}
v = (h^2z - i) (G - i)^{-1} v  + (G - i)^{-1} (G - h^2 z ) v,
\end{equation}
holds for any $z \in \mC$. Multiplying \eqref{eq:res_carleman3}  by the weight $w$ and taking the $H_{scl }^{1+a}$ norm squared we have
\begin{equation} \label{eq:sfift_a}
\norm{ w v }_{H^{1+a}_{scl}}^2 \le 8  \norm{ w  (G-i)^{-1}     v }_{H^{1+a}_{scl}}^2  +2\norm{ w  (G-i)^{-1}   (G - h^2z)  v }_{H_{scl}^{1+a}}^2,
\end{equation}
since  $|i-h^2z| \le 2$. From here we split the proof into two cases.\\
\textit{Case $a=0$}. We have shown that \eqref{eq:Lrange_est_semicl} holds for $v\in C^\infty_c$. We can easily extend this estimate for $v\in H^2_{\delta}$. Indeed, take a sequence of functions $v_j \in C^\infty_c$ such that $v_j \to v$ in $H^2_{\delta}$. Applying \eqref{eq:Lrange_est_semicl} to the $v_j$, we can pass to the limit  using that $G$ is bounded from $H^2_\delta$ to $L^2_{\delta}$.  By Lemma \ref{lemma_shift} with $k=-1$,  $(G-i)^{-1} v \in H^2_\delta$ so by the previous density argument, we can apply \eqref{eq:Lrange_est_semicl} to the first term of \eqref{eq:sfift_a} with $a=0$. Then 
\begin{multline*} 
\norm{ w v }_{H^{1}_{scl}}^2 \le C \beta^{-1} \tau^{-1} h^{-2} \norm{ w^{-1}  (G - h^2z)  (G-i)^{-1}     v }^2 \\
   + C bh\norm{(G-i)^{-1} v}^2 +2\norm{ w  (G-i)^{-1}   (G - h^2z)  v }_{H_{scl}^{1}}^2.
\end{multline*}
  Using that the operators
$ (G - h^2z) $ and $ (G-i)^{-1} $ commute in the first  term on the right hand side  and taking $h$ small enough such that $\tau^{-1} h^{-2} \beta^{-1}>1$, yields 
\begin{multline*} 
\norm{ w v }_{H^1_{scl}}^2 \le C \beta^{-1} \tau^{-1} h^{-2} \norm{ w^{-1}   (G-i)^{-1} (G - h^2z)    v }^2 \\
  + C bh\norm{(G-i)^{-1} v}^2 + 2 \beta^{-1} \tau^{-1} h^{-2}\norm{ w  (G-i)^{-1}   (G - h^2z)  v }_{H_{scl}^{1}}^2.
\end{multline*}
Hence, applying Lemma \ref{lemma_shift} with $k=-1$, $k=0$, and $k=1$ to each term on the right   and using that $w\le w^{-1}$ in the last term, we finally  obtain
\begin{equation*} 
\norm{ w v }_{H^1_{scl}}^2 \le C \beta^{-1} \tau^{-1} h^{-2} \norm{ w^{-1}    (G - h^2z)    v }_{H^{-1}_{scl}}^2  + C bh\norm{ v}_{H^{-1}_{scl}}^2,
\end{equation*} 
which combined with \eqref{eq:G_def} yields the desired estimate. \\
\textit{Case $a=1$}. By a straightforward computation and applying Lemma \ref{lemma_shift} with $k,a=1$ to each term on the right  of \eqref{eq:sfift_a} we get
\begin{equation*} 
\norm{ w v }_{H^{2}_{scl}}^2 \le 32 (4  \norm{ w    v }^2  +\norm{ w  (G - h^2z)  v }^2).
\end{equation*} 
Hence, using \eqref{eq:Lrange_est_semicl} in the first term, and taking  $\tau^{-1} h^{-2} > 1$ in the second gives the desired estimate, as in the previous case. 
\end{proof}

\section{Absorbing the short range potentials} \label{sec:shortrange}

In this section we finally prove Theorem \ref{thm:main1}. The first step is to introduce the short range perturbation in \eqref{eq:Lrange_est_H-1}. Once we have an estimate for the full operator, we can fix an appropriate value of $\tau$ and remove the exponential conjugation. The final step is to extend by density the resulting estimate to an appropriate functions space, so that we are not restricted to compactly supported smooth functions. Here we shall use the Friedrichs lemma. In this step we will strengthen the assumptions on the long range potentials slightly and assume that \eqref{longrange1} holds. First  recall that  $H=P + 2W\cdot D + (D \cdot W) +V$.
\begin{Proposition} \label{prop_shortrange_carleman} 
 Let  $W \in L^\infty(\mR^n, \mR^n)$ with $\nabla \cdot W^L \in L^\infty(\mR^n)$, let $V \in L^\infty(\mR^n,\mR)$, and let $\sigma >0$ be such that \eqref{shortlong}, \eqref{shortrange1}, and \eqref{longrange2} hold.  Assume also that  $1/2<\delta<\infty$, $\tau> \tau_0(\delta, W^S,V^S) \ge 1 $ and $a =0,1$. Then  for any $v \in C^{\infty}_c(\mR^n)$, 
\begin{multline} \label{eq:carleman_shortrange}
 \norm{  wv}_{H_{scl}^{1 +a }}^2 \le  \; C(\beta,\tau) |\Im(z)| h \norm{ v}^2 \\  
 + C\beta^{-1} \tau^{-1} h^2 \norm{ w^{-1}e^{{\frac{1}{2}} \tau \psi}  (H -z )  e^{-{\frac{1}{2}}\tau \psi} v}_{H_{scl}^{-1 +a }}^2 .
\end{multline}
whenever $z \in \mC$ with $\abs{\Im(z)} \leq 1$, $a=0$, and $ h   \le h_0(n,\delta,\tau, W,   V) < 1$.  Moreover, under the extra assumption \eqref{shortrange2} the previous estimate also holds for $a=1$.
\end{Proposition}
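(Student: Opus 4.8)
The plan is to substitute $H-z=(P+L+V^L-z)+\mathcal{L}^S$ into the shifted long range estimate \eqref{eq:Lrange_est_H-1} of Proposition~\ref{prop_loongrange_est_shifted} and then absorb the extra short range contribution using the prefactor $\tau^{-1}$ (for $\tau$ large) and the smallness of $h$. As in the proof of Lemma~\ref{lemma_longrange_resolvent_carleman}, since the estimate for a given $\delta$ follows from the one for any smaller $\delta$, I may assume $1/2<\delta<\min\{(1+\sigma)/2,\,1\}$; then \eqref{shortrange1} (resp.\ \eqref{shortrange2}) guarantees that $\br{x}^{2\delta}W^S$, $\br{x}^{2\delta}V^S$ (resp.\ $\br{x}^{2\delta}\nabla\cdot W^S$) are bounded, while $|\nabla w|\le w$ since $w=\br{x}^{-\delta}$ with $\delta<1$. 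Write $\mathcal{L}^S u:=2W^S\cdot Du+(D\cdot W^S)u+V^S u$; a direct computation shows that the exponential conjugation only perturbs the zeroth order part,
\[
e^{\frac12\tau\psi}\mathcal{L}^S e^{-\frac12\tau\psi}=2W^S\cdot D+(D\cdot W^S)+\bigl(V^S+i\tau\,W^S\cdot\nabla\psi\bigr)=:\mathcal{L}^S_\tau ,
\]
and the new zeroth order coefficient is still $O(\tau\br{x}^{-1-\sigma})$ because $|\nabla\psi|\le1$.

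The core of the argument is a termwise bound for $\norm{w^{-1}\mathcal{L}^S_\tau v}_{H^{-1+a}_{scl}}$, obtained by splitting $w^{-1}(\cdot)=(w^{-2}(\cdot))\,w$ so that the short range decay $\br{x}^{-1-\sigma}$ beats the weight $w^{-2}=\br{x}^{2\delta}$. Using $\norm{\,\cdot\,}_{H^{-1+a}_{scl}}\le\norm{\,\cdot\,}_{L^2}$ for the zeroth order part, and the elementary bound $\norm{w\,h\nabla v}\le C\norm{wv}_{H^1_{scl}}$ (which follows from $|\nabla w|\le w$), one gets
\[
\norm{w^{-1}\bigl(V^S+i\tau W^S\cdot\nabla\psi\bigr)v}_{H^{-1+a}_{scl}}\le C\tau\,\norm{\br{x}^{2\delta}(|V^S|+|W^S|)}_{L^\infty}\norm{wv},
\]
\[
\norm{w^{-1}\,2W^S\cdot Dv}_{H^{-1+a}_{scl}}\le C h^{-1}\norm{\br{x}^{2\delta}W^S}_{L^\infty}\norm{wv}_{H^{1+a}_{scl}},
\]
the single power $h^{-1}$ coming from $Dv=h^{-1}(hD)v$. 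For the divergence term when $a=0$ one cannot work in $L^2$, so here I would use the $H^{-1}_{scl}$ norm genuinely: testing against $\vartheta\in C^\infty_c$ and integrating by parts to move the derivative off $W^S$, together with $|\nabla(w^{-1}\vartheta)|\le w^{-1}(|\vartheta|+|\nabla\vartheta|)$, yields $\norm{w^{-1}(D\cdot W^S)v}_{H^{-1}_{scl}}\le Ch^{-1}\norm{\br{x}^{2\delta}W^S}_{L^\infty}\norm{wv}_{H^1_{scl}}$. When $a=1$ and \eqref{shortrange2} holds, $\nabla\cdot W^S$ is bounded and $(D\cdot W^S)v$ is handled directly in $L^2$ like a zeroth order term, with bound $\le C\norm{\br{x}^{2\delta}\nabla\cdot W^S}_{L^\infty}\norm{wv}_{H^2_{scl}}$.

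To finish, I combine these estimates with \eqref{eq:Lrange_est_H-1} and the triangle inequality in $H^{-1+a}_{scl}$,
\[
\norm{wv}_{H^{1+a}_{scl}}^2\le Cbh\,\norm{v}_{H^{-1+a}_{scl}}^2+C\beta^{-1}\tau^{-1}h^2\Bigl(\norm{w^{-1}e^{\frac12\tau\psi}(H-z)e^{-\frac12\tau\psi}v}_{H^{-1+a}_{scl}}^2+\norm{w^{-1}\mathcal{L}^S_\tau v}_{H^{-1+a}_{scl}}^2\Bigr),
\]
and observe that the $h^2$ prefactor exactly cancels the $h^{-2}$ produced by the first order terms, so that $C\beta^{-1}\tau^{-1}h^2\norm{w^{-1}\mathcal{L}^S_\tau v}_{H^{-1+a}_{scl}}^2\le C\beta^{-1}\tau^{-1}\bigl(C_{W^S}+h^2\tau^2 C_{W^S,V^S}\bigr)\norm{wv}_{H^{1+a}_{scl}}^2$. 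Choosing first $\tau>\tau_0(\delta,W^S,V^S)$ (also large enough for Proposition~\ref{prop_loongrange_est_shifted}) so that $C\beta^{-1}\tau^{-1}C_{W^S}<\tfrac18$, and then $h<h_0(n,\delta,\tau,W,V)$ so that $C\beta^{-1}\tau h^2 C_{W^S,V^S}<\tfrac18$, the short range term is absorbed into the left hand side; multiplying through by $2$, bounding $\norm{v}_{H^{-1+a}_{scl}}\le\norm{v}$, and recalling $b=C(\beta,\tau)|\Im(z)|$ gives \eqref{eq:carleman_shortrange}. I expect the main obstacle to be the $a=0$ divergence term: one must genuinely exploit the $H^{-1}_{scl}$ structure through the integration by parts above while keeping precise track of all weights and of the single $h^{-1}$ produced by the gradient, so that after multiplication by $h^2$ what remains is a clean, $\tau$-absorbable constant; the factor $\tau$ generated by the conjugation of $\mathcal{L}^S$ is harmless only because $\tau$ is frozen before $h\to0$.
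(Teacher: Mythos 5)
Your proposal is correct and follows essentially the same route as the paper: add and subtract the short range terms in the shifted estimate \eqref{eq:Lrange_est_H-1}, note that the conjugation only adds the zeroth order term $i\tau W^S\cdot\nabla\psi$, treat $(D\cdot W^S)v$ by moving the derivative off $W^S$ in the $H^{-1}_{scl}$ norm for $a=0$ (the paper does this via the product-rule identity $w^{-1}(\nabla\cdot W^S)v=\nabla\cdot(w^{-1}W^Sv)-W^S\cdot\nabla(w^{-1}v)$, which is the same mechanism as your duality pairing) and directly in $L^2$ under \eqref{shortrange2} for $a=1$, and absorb by choosing $\tau$ large first and then $h$ small, exactly as in the paper's choice of $\tau_0$ and $h_0$. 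Your bookkeeping of the powers of $h$ and $\tau$, including the cancellation of $h^{-2}$ against the prefactor $h^2$ and the harmless $\tau h^2$ contribution from the conjugation term, matches the paper's $(\tau^{-1}+\tau h^2)$ coefficient.
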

\begin{proof}
Again, we assume $1/2<\delta< \min\left\{ (\sigma +1) /2,1 \right\}$ without loss of generality. We first consider the case $a=0$. Adding and subtracting the terms with the short range perturbation in the right hand side of \eqref{eq:Lrange_est_H-1}, we have
\begin{multline*} 
 \norm{  wv}_{H_{scl}^{1}}^2    
 \le  C\beta^{-1} \tau^{-1} h^2 \norm{ w^{-1}e^{\frac{1}{2} \tau \psi}  (H -z )  e^{-\frac{1}{2} \tau \psi} v}_{H_{scl}^{-1}}^2 \\
 + C\beta^{-1} \tau^{-1}  \norm{ w^{-1} h( 2W^S\cdot D + i \tau W^S \cdot\nabla \psi + D \cdot W^S +V^S )v}_{H_{scl}^{-1}}^2  + C bh\norm{ v}_{H^{-1}_{scl}}^2.
\end{multline*} 
 As we mentioned previously, we can estimate the term $(D\cdot W^S)v$ in the $H_{scl}^{-1}$ norm, in fact, we have
 \[ w^{-1}(\nabla \cdot W^S) v =   \nabla \cdot( w^{-1} W^S v ) - W^S \cdot \nabla ( w^{-1}v ),\]
 in the sense of distributions. Thus, 
\begin{multline} \label{eq:dual_by_parts}
 \norm{w^{-1}h \nabla \cdot W^S v}_{H_{scl}^{-1}}  \le   \norm{h  \nabla \cdot( w^{-1} W^S v ) }_{H_{scl}^{-1}}   +  \norm{ W^S \cdot h\nabla ( w^{-1}v )}_{H_{scl}^{-1}}    \\
\le  \norm{ w^{-1} W^S v  }  + \norm{ w^{-2} W^S}_{L^\infty}\norm{ w^2 h\nabla ( w^{-1}v )}_{H_{scl}^{-1}}  \le C \norm{ w^{-2} W^S}_{L^\infty} \norm{wv},
 \end{multline} 
since $|\nabla w^{-1}| \le \delta w^{-1}$. Therefore, using that ${|\nabla \psi| \le 1}$ and $w^{-1}\ge 1$,   we obtain 
\begin{multline*} 
 \norm{  wv}_{H_{scl}^{1}}^2    
 \le  C\beta^{-1} \tau^{-1} h^2 \norm{ w^{-1}e^{\frac{1}{2} \tau \psi}  (H -z )  e^{-\frac{1}{2} \tau \psi} v}_{H_{scl}^{-1}}^2\\
 +  C(\beta) \norm{wv}_{H_{scl}^{1}}^2 \left( (\tau^{-1}  +  \tau  h^2 ) \norm{ w^{-2} W^S}_{L^\infty}^2   
 +   \tau^{-1}  h^2 \norm{ w^{-2} V^S}_{L^\infty}^2 \right)  + C bh\norm{ v}^2
\end{multline*} 
for an appropriate constant $C(\beta)>0$. Since $w^{-2} \le \br{x}^{1+\sigma}$, the short range conditions on the potentials guarantee that the $L^\infty$ norms appearing in the previous estimate are finite.  Hence taking $\tau > 4 C(\beta) \norm{  \br{x}^{1+\sigma} W^S}_{L^\infty}^2$ and $h^2 <  \left (4 C(\beta) \tau (\norm{  \br{x}^{1+\sigma} W^S}_{L^\infty}^2 + \norm{  \br{x}^{1+\sigma} V^S}_{L^\infty}^2 ) \right)^{-1}$ to absorb the middle term on the right in the left hand side, and using that $b= C(\delta,\tau) |\Im(z)|$ yields the desired result. 

The case $a=1$ is even more simple since we do not need the integration by parts in \eqref{eq:dual_by_parts} thanks to the fact that the norm $\norm{  \br{x}^{1+\sigma} \nabla \cdot  W^S}_{L^\infty}$ is finite by \eqref{shortrange2}.
\end{proof}

We are going to use the previous proposition to prove Theorem \ref{thm:main1}, but first we need a couple of lemmas. The first one is necessary  control the term with the factor $|\Im(z)|$ in \eqref{eq:carleman_shortrange}.
\begin{Lemma}  \label{lemma_imz_usquared}
  Let  $W \in L^\infty(\mR^n, \mR^n)$, $V \in L^\infty(\mR^n,\mR)$, and $u\in C^\infty_c(\mR^n)$. Then
\begin{equation*}
\abs{\Im(z)} \norm{u}^2 \leq \abs{((H -z)u, u)}.
\end{equation*}
\end{Lemma}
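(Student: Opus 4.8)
The plan is to start from the self-adjointness of $H$ (Proposition \ref{prop_self_adjoint}), which guarantees that $H$ is symmetric on $C^\infty_c(\mR^n)$, so that $(Hu,u)$ is real for every $u \in C^\infty_c(\mR^n)$. This is the only structural fact about $H$ that we need; the $L^\infty$ hypotheses on $W$ and $V$ are exactly what makes the multiplication and first-order terms symmetric, and in particular $((D\cdot W)u,u) = (W\cdot Du, u) + (u, W \cdot Du)$ is real after an integration by parts, so no smoothness of $W$ is required for this identity at the level of $C^\infty_c$ test functions.

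The key computation is then elementary: write $z = \lambda + i\,\Im(z)$ and expand
\[
((H-z)u, u) = (Hu,u) - \lambda \norm{u}^2 - i\,\Im(z)\norm{u}^2.
\]
Since $(Hu,u) \in \mR$, the imaginary part of the left-hand side is exactly $-\Im(z)\norm{u}^2$, hence
\[
\abs{\Im(z)} \norm{u}^2 = \abs{\Im((H-z)u,u)} \leq \abs{((H-z)u,u)},
\]
which is the claimed inequality. So the proof is essentially a one-line consequence of the symmetry of $H$ plus the trivial bound $\abs{\Im w} \le \abs{w}$.

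There is no real obstacle here; the only point requiring a word of care is justifying $(Hu,u)\in\mR$ for $u\in C^\infty_c$ directly (rather than invoking the full self-adjointness statement): one integrates by parts in the terms $2(W\cdot Du,u)$ and $((D\cdot W)u,u)$ and checks that their sum equals its own conjugate, using that $W$ is real-valued and $V$ is real-valued; the term $(-\Delta u, u) = \norm{\nabla u}^2$ is manifestly real. I would present it in whichever form is cleanest given what has already been established in the paper — most likely just citing the symmetry of $H$ on $C^\infty_c(\mR^n)$ from Proposition \ref{prop_self_adjoint} and then performing the two-line expansion above.
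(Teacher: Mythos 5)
Your proof is correct and takes essentially the same route as the paper: both rely on the symmetry of $H$ on $C^\infty_c(\mR^n)$ (so that $(Hu,u)$ is real), compute $\Im((H-z)u,u) = -\Im(z)\norm{u}^2$, and then apply $\abs{\Im w} \le \abs{w}$.
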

\begin{proof}
This follows by the symmetry of the operator $H$. In fact, by integration by parts,  $\Im((P+2W\cdot D + D \cdot W +V)u,u) = 0$, and therefore
\begin{equation*} 
\Im(z) \norm{u}^2 = \Im(zu, u) = - \Im((P+2W\cdot D + D \cdot W +V-z)u, u).
\end{equation*}
This proves the lemma.
\end{proof}
We now state the Friedrichs lemma as in \cite[Lemma 17.1.5]{H2} (see also \cite[Lemma 1.5.2]{cherriermilani}). We need this result so that we can remove the restriction $u \in C^\infty_c$ using a density argument. 
\begin{Lemma} \label{lemma_firedrich}
Let $v \in L^2$ and let $ |a(x)-a(y)| \le  M|x-y|$ if $x,y \in \mR^n$. If $\Phi \in C^\infty_c$ and $\Phi_\eps(x) = \Phi(x/\eps)\eps^{-n}$, then
\[ 
\norm{(a D_j v)*\Phi_\eps-a(D_jv*\Phi_\eps)}_{L^2} = o(1) \text{ as $\eps \to 0$}.
\]
\end{Lemma}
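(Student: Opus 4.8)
The plan is to prove this classical commutator lemma by writing the commutator explicitly, integrating by parts to move $D_j$ off $v$, and then analyzing the two resulting terms, which turn out to cancel in the limit.

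First I would set $T_\eps v := (aD_j v)*\Phi_\eps - a\,(D_j v * \Phi_\eps)$ and, interpreting the convolutions as the distributional pairings of $D_j v \in \mS'$ against the compactly supported Lipschitz functions $y \mapsto a(y)\Phi_\eps(x-y)$ and $y \mapsto \Phi_\eps(x-y)$, integrate by parts in $y$. Since $\p_j a \in L^\infty$ ($a$ being Lipschitz), this gives
\[
T_\eps v = \frac{1}{i}\Big( A_\eps v - \big[(\p_j a) v\big]*\Phi_\eps \Big), \qquad A_\eps v(x) := \int \big(a(y)-a(x)\big)(\p_j\Phi_\eps)(x-y)\,v(y)\,dy .
\]
Using $|a(y)-a(x)| \le M|x-y|$ and that $\supp\Phi$ is compact, one has $|A_\eps v(x)| \le M\,(g_\eps * |v|)(x)$ with $g(z) = |z|\,|\p_j\Phi(z)| \in L^1(\mR^n)$ and $g_\eps(x) = \eps^{-n} g(x/\eps)$, so by Young's inequality $\norm{T_\eps v}_{L^2} \le C_M \norm{v}_{L^2}$ with $C_M$ independent of $\eps$. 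Hence, by a standard density argument, it suffices to prove the claim for $v \in C^\infty_c(\mR^n)$.

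Next I would treat the two terms separately. The approximate-identity property gives $[(\p_j a)v]*\Phi_\eps \to \big(\int\Phi\big)(\p_j a)v$ in $L^2$, since $(\p_j a)v \in L^2$. For $A_\eps v$, the change of variables $z = (x-y)/\eps$ together with $a(x-\eps z) - a(x) = -\eps\int_0^1 z\cdot\nabla a(x-t\eps z)\,dt$ for a.e.\ $x$ (valid because $a$ is Lipschitz, hence absolutely continuous on segments, with a.e.\ gradient $\nabla a \in L^\infty$) yields
\[
A_\eps v(x) = -\int (\p_j\Phi)(z)\, z_k \Big( \int_0^1 (\p_k a)(x-t\eps z)\,dt \Big) v(x-\eps z)\,dz .
\]
Writing $(\p_k a)(x-t\eps z)\,v(x-\eps z) = [(\p_k a)(x-t\eps z)-(\p_k a)(x)]\,v(x-\eps z) + (\p_k a)(x)[v(x-\eps z)-v(x)] + (\p_k a)(x)v(x)$ and using $\int z_k (\p_j\Phi)(z)\,dz = -\delta_{jk}\int\Phi$, the last summand reproduces $\big(\int\Phi\big)(\p_j a)v$, while the two error terms tend to $0$ in $L^2$: for $v \in C^\infty_c$ the term containing $v(\cdot-\eps z)-v$ is $O(\eps)$ pointwise with fixed compact support, and the term containing $(\p_k a)(\cdot-t\eps z)-(\p_k a)(\cdot)$ vanishes because $\p_k a \in L^\infty \subset L^2_{loc}$ and translation is continuous in $L^2$ on a fixed compact set, combined with dominated convergence in the $(t,z)$-integral (the integrand being dominated by $2M|z|\,|\p_j\Phi(z)| \in L^1$). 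Thus $A_\eps v \to \big(\int\Phi\big)(\p_j a)v$ in $L^2$, and subtracting the two limits gives $T_\eps v \to 0$ in $L^2$, which is the assertion.

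The main obstacle is the low regularity of $a$: since $a$ is only Lipschitz, $\nabla a$ need not be continuous, so the convergence $A_\eps v \to \big(\int\Phi\big)(\p_j a)v$ cannot be obtained from a Taylor expansion of $a$, but must be extracted from the fundamental-theorem-of-calculus representation above together with $L^2_{loc}$-continuity of translations applied to the bounded function $\nabla a$. The uniform operator bound $\norm{T_\eps}_{L^2 \to L^2} \le C_M$ is precisely what legitimizes the reduction to smooth $v$, where the troublesome factor $v(\cdot-\eps z)$ can be controlled by uniform continuity.
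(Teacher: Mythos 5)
Your proof is correct: the kernel representation of the commutator, the uniform bound $\norm{T_\eps}_{L^2\to L^2}\le C_M$ via the Lipschitz bound and Young's inequality, and the convergence on the dense subspace $C^\infty_c$ (with the two limits $(\int\Phi)(\p_j a)v$ cancelling, so no normalization of $\Phi$ is needed) together give the statement. Note that the paper does not prove this lemma but quotes it from H\"ormander \cite[Lemma 17.1.5]{H2}, and your argument is essentially the classical Friedrichs-lemma proof given there, so there is nothing further to reconcile.
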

We can now prove the main result in this paper.
\begin{proof}[Proof of Theorem \ref{thm:main1}]
Let $u \in C^{\infty}_c(\mR^n)$. We fix a sufficiently large $\tau $ so that \eqref{eq:carleman_shortrange} holds, and choose $v = e^{\frac{1}{2} \tau \psi} u$. Next, we remove the exponentials by using that $1 \le e^{ \frac{1}{2}\tau \psi} \le e^{ \frac{1}{2}\tau }$ (there are some extra terms appearing in the left hand side due to the $H^{1 + a}_{scl}$ norm, but they can be absorbed easily for $h<c\tau^{-1}$). Hence the estimate 
\begin{equation*}
\norm{ w  u}_{H_{scl}^{1+a}}^2 \le    C |\Im(z)| h \norm{u}^2  + C h^2  \norm{ w^{-1}  (H -z )  u }_{H_{scl}^{-1+a}}^2
\end{equation*}
holds for  $a=0,1$, depending on the conditions assumed on $W$, and for some $C = C(\delta, V,W)>0$ (also depending on the fixed $\tau$). Then, we can apply Lemma \ref{lemma_imz_usquared} and Young's inequality to the first term on the right. Thus
\begin{equation*}
C |\Im(z)| h \norm{u}^2 \le  \frac{1}{4}\norm{wu}_{H^1_{scl}}^2
+ C^2 h^2 \norm{w^{-1}  (H -z )  u}_{H^{-1}_{scl}}^2 .
\end{equation*}
 This yields the  estimate
\begin{equation} \label{eq:Hest1}
\norm{ w  u}_{H_{scl}^{1+a}} \le C(\delta, V,W) h \norm{w^{-1}  (H -z )  u}_{H_{scl}^{-1+a}} .
\end{equation}
This estimate holds under the assumption that $u \in C^\infty_c$. We are going to extend it for any $u \in H^1_{scl}$ such that $w^{-1} (H-z)u \in H^{-1 +a}_{scl}$. We restrict ourselves to the case of $a=0$, since $a=1$ follows from the same arguments with minor modifications (the condition \eqref{shortrange2} is again essential in the case $a=1$ so that the short range terms are bounded in $L^2$ instead of just in $H^{-1}$). Also, we now drop temporarily the semiclassical spaces since all the convergence arguments we need work independently of $h$. 

 Let $\Phi_\eps(x) := \eps^{-n}\Phi(\eps^{-1} x)$, where $\Phi $ is a standard smooth mollifier, and  $\chi_\eps := \chi(\eps x)$ where $0 \le \chi(x) \le 1$ is a smooth cut-off function such that $\chi(x) =1$ for $|x| \le 1$ and $\chi(x) = 0$ if $|x|\ge 2$. Let also  $u \in H^1$, and $u_\eps = \chi_\eps(u*\Phi_\eps)$. Then $u_\eps \in C^\infty_c$ and we have that $u_\eps \to u$ in $H^1$ as $\eps \to 0$. We would like to show  that  $w^{-1} (H-z) u_\eps \to  w^{-1} (H-z) u$ in $H^{-1}$ when $\eps \to 0$. Notice that since the potentials are bounded, for any $u\in H^1$ we have  $(H-z)u \in H^{-1}$.

  We decompose $H$ in a long range Hamiltonian and a short range perturbation $ H u  = (H^L +S) u$ where 
\begin{align*}
 H^L u &= (-\Delta  +  2W^L\cdot D  + V^L)u,  \\
 Su &= (2W^S\cdot D + D\cdot W^S +  D \cdot W^L + V^S)u. 
 \end{align*}
The perturbation $S$ is bounded from $H^1$ to $H^{-1}$, in fact  a better estimate holds:
\begin{equation}\label{eq:S_bound}
 \norm{w^{-1} Su}_{H^{-1}} \le C \norm{w u }. 
\end{equation}
This follows directly from the short range conditions \eqref{shortrange1} on $W^S,V^S$ and the long range conditions \eqref{longrange1} on $W^L$ (we have already controlled the term $\nabla  \cdot W^S$ in \eqref{eq:dual_by_parts}). Therefore, it is enough to show  that  $w^{-1} (H^L-z) u_\eps \to  w^{-1} (H^L-z) u$ in $H^{-1}$ when $\eps \to 0$.

Now, let $v_\eps = u*\Phi_\eps$ so that $u_\eps = \chi_\eps v_\eps$.  Then
\begin{multline} \label{eq:o_eps1}
 \norm{  \chi_\eps w^{-1} (H^L-z)  v_\eps -  w^{-1} (H^L-z) u_\eps }_{H^{-1}} =  \norm{ w^{-1} [\chi_\eps,H^L-z] v_\eps }_{H^{-1}}   \\ 
 =   \norm{ w^{-1} ( \Delta \chi_{\eps} + 2 \nabla \chi_\eps \cdot \nabla +2i W^L\cdot \nabla \chi_\eps ) v_\eps }_{H^{-1}}   \le C\eps^{1-\delta}\norm{v_\eps}, 
\end{multline}
where we have used that $|w^{-1} \nabla \chi_\eps|,|w^{-1} \Delta \chi_\eps|\le C \eps^{1-\delta}$ to get the last inequality.

By the Friedrichs lemma, commuting the convolution with the long range Hamiltonian $H^L$, one gets an error term which is small in the $L^2$ norm as $\eps \to 0$.  Since
\begin{multline}  \label{eq:termbyterm}
 \norm{ \Phi_\eps * [ w^{-1} (H^L-z)u] - w^{-1}(H^L-z)v_\eps} \le \\
  \norm{ \Phi_\eps * ( w^{-1} \Delta u) - w^{-1} (\Delta v_\eps)}   + 2\norm{ \Phi_\eps * ( w^{-1} W^L \cdot \nabla  u) - w^{-1} (W^L \cdot \nabla  v_\eps)}  \\ + \norm{ \Phi_\eps * [ w^{-1} (V^L-z)u] - w^{-1}(V^L-z)v_\eps} ,
\end{multline}
we can verify this term by term. First, if  $1/2<\delta< 1$ (which can be assumed without  loss of generality),   $w^{-1}$  and all its derivatives are Lipschitz functions  in $\mR^n$. As a consequence, as $\eps \to 0$, 
\[  \norm{ \Phi_\eps * ( w^{-1} \Delta u) - w^{-1} (\Delta v_\eps)} = o(1),\]
applying  Lemma \ref{lemma_firedrich}. To control the last two terms in the same way, we need  to impose the long range  conditions $w^{-1} |\nabla V^L| \le C$ and $w^{-1} |\nabla W^L| \le C$ on the potentials so that both $w^{-1}V^L$ and $w^{-1} W^L$ have bounded gradients in $\mR^n$. Then, using this in \eqref{eq:termbyterm}  yields
\begin{equation} \label{eq:o_eps2}
 \norm{ \Phi_\eps * [ w^{-1} (H^L-z)u] - w^{-1}(H^L-z)v_\eps}_{H^{-1}} = o(1) .
  \end{equation}
  
As a consequence, using \eqref{eq:o_eps1} and \eqref{eq:o_eps2} and using the fact that one has $w^{-1} (H^L-z)u \in H^{-1}$, we get that
\begin{align*}
     &\norm{ w^{-1}(H^L-z)u_\eps - w^{-1} (H^L-z)u}_{H^{-1}} \\
     & \quad \leq \norm{ w^{-1}(H^L-z)u_\eps - \chi_{\eps} w^{-1} (H^L-z)v_{\eps}}_{H^{-1}} \\
     & \qquad + \norm{ \chi_{\eps} w^{-1}(H^L-z)v_\eps - \chi_{\eps} \Phi_\eps * [ w^{-1} (H^L-z)u]}_{H^{-1}} \\
     & \qquad + \norm{ \chi_{\eps} \Phi_\eps * [ w^{-1} (H^L-z)u]- w^{-1} (H^L-z)u}_{H^{-1}}= o(1),
\end{align*}
and hence $w^{-1} (H^L-z) u_\eps \to  w^{-1} (H^L-z) u$  in $H^{-1}$.  We can use now \eqref{eq:S_bound}  to conclude that  $w^{-1} (H-z) u_\eps \to  w^{-1} (H-z) u$ when $\eps \to 0$. This shows that \eqref{eq:Hest1} holds (with $a=0$) for any $u \in H^1$.

Let us introduce now the resolvent operator $R(z) = (H-z)^{-1}$. Under the conditions assumed on the potentials, $H$ is self-adjoint (see Proposition \ref{prop_self_adjoint} in the Appendix). As a consequence $R(z) $ is well defined for every $z\in \mC$ such that $\Im(z) \neq 0$ and it satisfies the estimate
\[ \norm{R(z) f}  \le \frac{1}{\abs{\mathrm{Im}(z)}} \norm{f}, \]
for every $f\in L^2$. This means that $R(z)g$, $\Im(z) \neq 0$, is well defined for $g\in L^2_\delta \subset L^2$. Also, if $g\in C^\infty_c$ the previous estimates imply that $u = R(z)g\in H^1$ (or $H^2$, if \eqref{shortrange2} holds). To see this, notice we have that $Hu = g + z u$, and since $u \in L^2$, all the terms in $Hu$ must have at least $H^{-1}$ regularity, except perhaps for the  term $\Delta u$ (for the short range perturbation see \eqref{eq:S_bound}). But since $g$ is smooth,  we must also have $\Delta u \in H^{-1}$ which shows that $u \in H^1$.

Therefore we can apply \eqref{eq:Hest1} with $a=0$ to the function $u = R(z)g$, taking $g= h\partial^{\alpha_2}f$, for  $f\in C^\infty_c$ and $|\alpha_2| \le 1$. With this choice of $g$ we can finally get rid of the semiclassical $H_{scl}^{-1}$ norm in the right hand side of \eqref{eq:Hest1}, and using that $h^{-2}= \Re(z) = \lambda$, this yields
\begin{equation*} 
\lambda \norm{   R(z) \partial^{\alpha_2} f}^2_{L^2_{-\delta}} + \norm{  \partial^{\alpha_1} R(z) \partial^{\alpha_2} f}_{L^2_{-\delta}}^2 \le    C(\delta, V,W)  \lambda^{\abs{\alpha_2}} \norm{  f }_{{L^2_{\delta}}}^2 ,
\end{equation*}
for every $f \in C^\infty_c$ and $|\alpha_1|=1$. This estimate is the same as \eqref{eq:main_estimate1}, and since  $ C^\infty_c$ is dense in $ L^2_\delta$ it can be extended for every $ f \in L^2_\delta$. This is enough to finish the proof of \eqref{eq:main_estimate1}. As mentioned previously, the proof of \eqref{eq:main_estimate2} from \eqref{eq:Hest1} with $a=1$ is completely analogous to the case $a=0$. This concludes the proof of the main theorem.
\end{proof}

\section{The limiting  absorption principle } \label{sec:lap}

In this section we prove Theorem \ref{thm:main2} from Theorem \ref{thm:main1}.  The fact that one can define  the  resolvent $R(\lambda \pm i0)$ as a bounded operator between the $L^2_\delta$  and $L^2_{-\delta}$ spaces is known as the limiting absorption principle. 

To define the resolvent when $\Im(z) = 0$ one needs show that the limit $R(\lambda \pm i0) f = \lim_{\eps \to  0} R(\lambda \pm i \eps) f$  exists in $L^2_{-\delta}$. This follows from \eqref{eq:main_estimate2} if one can show that
\begin{equation} \label{eq:SRC}
 \norm{\partial_r u - i\lambda u}_{L^2_{\delta -1}} \le C \norm{(H-z) u }_{L^2_{\delta}},  
 \end{equation}
holds for $1/2<\delta<1$ and $u\in H^1$, or other analogous condition. The previous estimate is known as a Sommerfeld radiation condition, see \cite{RodnianskiTao,TesisMiren} for more details. In our case we do not prove  a Sommerfeld radiation condition  like \eqref{eq:SRC}, we use instead the limiting absorption principle already  proved in \cite[Theorem 30.2.10]{H2}. This holds assuming  that $W^S$ is continuous in addition to the conditions assumed in Theorem \ref{thm:main1}. To state H\"ormander's result we need to introduce the Agmon-H\"ormander space $B$ and its dual $B^*$.
\[ \norm{v}_B =  \sum_{j=1}^\infty  \left( R_j \int_{X_j} |v|^2 \, dx \right )^{1/2}<\infty, \]
\[ \norm{v}_{B^*} =  \sup_{j>0}   \left( R_j^{-1} \int_{X_j} |v|^2 \, dx \right )^{1/2}<\infty ,\]
where $R_0 =0$, $R_j = 2^{j-1}$ for $j>1$ and $X_j = \{x\in \mR^n: R_{j-1}\le |x| \le R_{j}\}$.

\begin{Theorem} \label{thm_hormander} $($\cite[Theorem 30.2.10]{H2}$)$.
Assume that $W$ and $V$ satisfy \eqref{shortlong}--\eqref{shortrange2}. Also, assume that $W^S$ is continuous. Then the eigenvalues $\lambda>0$ of $H$ are of finite multiplicity, and form a set $\Lambda$ which is discrete in $\mR_+$. Moreover, if $\lambda \in \mR_+ \setminus \Lambda$ and $\Re(z) = \lambda$, then $\partial^\alpha R(z) f \to \partial^{\alpha} R(\lambda \pm  i0) f$ in the weak$^*$  topology of $B^*$ for every $f\in B$ and $|\alpha| \le 2$, as $z \to \lambda$ in the respective complex half planes.
\end{Theorem}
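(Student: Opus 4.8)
The plan is to derive Theorem \ref{thm_hormander} as a direct specialization of \cite[Theorem 30.2.10]{H2}, so that the only substantive step is to verify that the operator $H = -\Delta + 2W\cdot D + (D\cdot W) + V$, under the hypotheses \eqref{shortlong}--\eqref{shortrange2} together with continuity of $W^S$, falls within the setting of \cite[Section 30.2]{H2}. That setting allows: a second order part which is a short range perturbation of the flat Laplacian with no trapped trajectories (here the second order part is exactly $-\Delta$, so this is automatic); a first order perturbation split into a long range piece with slowly decaying radial derivatives and a short range piece; and a potential split the same way. I would check that under \eqref{longrange1} (or already \eqref{longrange2}) the coefficients $W^L$ and $V^L$ are continuous, indeed Lipschitz, with $|W^L|\le C\br{x}^{-\sigma}$ and $|\partial_r W^L|, |\partial_r V^L|\le C\br{x}^{-1-\sigma}$, and that under \eqref{shortrange1}--\eqref{shortrange2} and the continuity assumption, $W^S$ is continuous with $|W^S|, |\nabla\cdot W^S|\le C\br{x}^{-1-\sigma}$ and $|V^S|\le C\br{x}^{-1-\sigma}$. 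These are precisely the flat case of the long range and short range conditions imposed on the first and zeroth order terms in \cite[Section 30.2]{H2}; continuity of $W^S$ is the one extra qualitative assumption (not implied by \eqref{shortrange1}, \eqref{shortrange2} alone) that is needed to land inside that framework, which is why it appears in the statement.

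Granting this reduction, \cite[Theorem 30.2.10]{H2} yields every assertion at once. Applied to $H$ --- which is self-adjoint by Proposition \ref{prop_self_adjoint}, so that $R(z) = (H-z)^{-1}$ is bounded on $L^2$ for $\Im z\neq 0$ --- it gives that the positive eigenvalues of $H$ have finite multiplicity and form a set $\Lambda$ discrete in $\mR_+$, with $\Lambda=\emptyset$ when $W^L=V^L=0$ (for then $H$ is $-\Delta$ plus a short range first and zeroth order perturbation, which has no positive eigenvalues by the argument of Kato on absence of positive eigenvalues, included in \cite{H2}); and it gives that for $\lambda\in\mR_+\setminus\Lambda$ with $\Re z=\lambda$ one has $\partial^\alpha R(z)f\to\partial^\alpha R(\lambda\pm i0)f$ in the weak$^*$ topology of $B^*$, for every $f\in B$ and every $|\alpha|\le 2$, as $z\to\lambda$ from the upper, resp.\ lower, half plane. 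The presence of derivatives up to order two here is built into the Agmon--Hörmander $B$/$B^*$ calculus of \cite[Chapter 14 and Section 30.2]{H2}: already $(-\Delta-\lambda)^{-1}$ maps $B$ boundedly into $B^*$ together with all its derivatives of order $\le 2$, and the long/short range perturbation is absorbed in \cite{H2} without losing this. Equivalently, once the $|\alpha|\le 1$ convergence is known, the case $|\alpha|=2$ follows by writing $-\Delta R(z)f = f + zR(z)f - (2W\cdot D + D\cdot W + V)R(z)f$ and using elliptic regularity uniformly in $z$, since $W$, $\nabla\cdot W$ and $V$ are bounded.

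The step I expect to be the only real obstacle is the bookkeeping of this hypothesis matching: \cite[Section 30.2]{H2} uses its own decomposition of the perturbation and its own (mild) regularity demands, and one must make sure that the weaker long range conditions \eqref{longrange2}, continuity of $W^S$, and condition \eqref{shortrange2} genuinely fall within it, in particular that no smoothness beyond Lipschitz is required of the long range coefficients. The analytic heart of \cite[Theorem 30.2.10]{H2} --- a Mourre-type positive commutator estimate with a microlocalized version of the dilation generator $A=\tfrac{1}{2}(x\cdot D + D\cdot x)$, producing uniform resolvent bounds off the discrete exceptional set, together with a compactness argument bounding the eigenvalues --- is internal to \cite{H2} and is taken as given; in spirit it is the same mechanism as the positive commutator arguments of Sections \ref{sec:commutator}--\ref{sec:shortrange} and of \cite{RodnianskiTao, Vodev}. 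A more self-contained alternative would be to combine the uniform bound \eqref{eq:main_estimate2} with a Sommerfeld radiation condition of the type \eqref{eq:SRC} to obtain existence and uniqueness of $R(\lambda\pm i0)$ directly, but that would still leave the discreteness and finite multiplicity of $\Lambda$ to be established separately, so reducing to \cite{H2} is the efficient route.
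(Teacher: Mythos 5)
Your proposal takes essentially the same route as the paper: Theorem \ref{thm_hormander} is stated there purely as a quotation of \cite[Theorem 30.2.10]{H2}, with no independent proof beyond noting that continuity of $W^S$ together with \eqref{shortlong}--\eqref{shortrange2} places $H$ inside H\"ormander's long range/short range framework, which is exactly the hypothesis-matching reduction you carry out. Your additional remarks (the $|\alpha|\le 2$ convergence and the alternative via \eqref{eq:SRC}) are consistent with the paper's discussion in Section \ref{sec:lap} and do not change the argument.
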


With this theorem we can finally define the resolvent operator $R(\lambda \pm  i0) $ in order to prove Theorem \ref{thm:main2}, but we would like to have convergence in the $L_{-\delta}^2$ spaces. This follows from the next brief lemma.
\begin{Lemma} \label{weak_star} 
Let $(u_j)_{j\in \mathbb{N}}$ be a sequence in $B^*$ such that  $ u_j \rightarrow u$ in the weak$^*$ topology of $B^*$. Then $ u_j \rightarrow u$ converges weakly in $L^2_{-\delta}$.
\end{Lemma}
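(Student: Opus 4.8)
The plan is to deduce the statement from two continuous inclusions of Agmon--H\"ormander type spaces, both valid precisely because $\delta>1/2$: namely $L^2_\delta \hookrightarrow B$ and $B^* \hookrightarrow L^2_{-\delta}$. For $g\in L^2_\delta$ I would estimate $\norm{g}_B=\sum_{j}(R_j\int_{X_j}|g|^2\,dx)^{1/2}$ using that $\br{x}^{2\delta}\gtrsim R_j^{2\delta}$ on $X_j$ for $j\ge 1$: setting $a_j:=\int_{X_j}\br{x}^{2\delta}|g|^2\,dx$, so that $\sum_j a_j=\norm{g}_{L^2_\delta}^2$, Cauchy--Schwarz gives $\norm{g}_B\lesssim(\sum_j R_j^{1-2\delta})^{1/2}\norm{g}_{L^2_\delta}$, and the series $\sum_j R_j^{1-2\delta}$ converges exactly when $\delta>1/2$. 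The same geometric-series computation applied to $u\in B^*$ gives $\int\br{x}^{-2\delta}|u|^2\,dx\lesssim\norm{u}_{B^*}^2\sum_j R_j^{1-2\delta}<\infty$, i.e.\ $B^*\hookrightarrow L^2_{-\delta}$ continuously; in particular $u_j,u\in L^2_{-\delta}$.

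Next I would recall that the duality between $B^*$ and $B$ is represented by the absolutely convergent integral $\langle u,\phi\rangle=\int u\phi\,dx$: applying Cauchy--Schwarz on each annulus $X_j$ together with the definitions of the two norms yields $\int|u\phi|\,dx\le\norm{u}_{B^*}\norm{\phi}_B$ (this is classical, see \cite{H2}). Since the dual of $L^2_{-\delta}$ is $L^2_\delta$, also under the pairing $\int u\phi\,dx$, it follows that for every $\phi\in L^2_\delta\subset B$ the $B^*$--$B$ pairing and the $L^2_{-\delta}$--$L^2_\delta$ pairing of $u_j$ (respectively $u$) against $\phi$ literally coincide.

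Combining the two steps: weak$^*$ convergence $u_j\to u$ in $B^*$ means $\langle u_j,\phi\rangle_{B^*,B}\to\langle u,\phi\rangle_{B^*,B}$ for all $\phi\in B$; restricting to $\phi\in L^2_\delta$ and using the identification of the pairings gives $\int u_j\phi\,dx\to\int u\phi\,dx$ for all $\phi\in L^2_\delta$, which is exactly weak convergence $u_j\rightharpoonup u$ in $L^2_{-\delta}$. The only point requiring care is the bookkeeping of the Agmon--H\"ormander spaces in the first two steps --- in particular that the abstract $B^*$-duality is given by integration against $B$-functions and that it restricts correctly to the weighted-$L^2$ pairing --- but this is standard and can be quoted from \cite{H2}; beyond the elementary summation of $\sum_j R_j^{1-2\delta}$ no genuine estimate is involved.
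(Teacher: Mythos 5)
Your proof is correct and is essentially the argument the paper intends: the lemma is reduced to the two continuous embeddings $L^2_\delta\hookrightarrow B$ and $B^*\hookrightarrow L^2_{-\delta}$ (coming from $\sum_j R_j^{1-2\delta}<\infty$ for $\delta>1/2$) together with the observation that the $B^*$--$B$ and $L^2_{-\delta}$--$L^2_\delta$ dualities are both the integral pairing. You simply make explicit the details the paper leaves to the reader (and obtain the embedding $B^*\hookrightarrow L^2_{-\delta}$ by a direct annulus-by-annulus estimate rather than by dualizing $L^2_\delta\hookrightarrow B$, which is a cosmetic variation).
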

\begin{proof}
It follows directly from the fact that $ \norm{v}_{B} \le C \norm{v}_{L^2_{\delta}}  $ (and hence that $L^2_\delta \subset B$ and $B^* \subset L^2_{-\delta}$ continuously).
\end{proof}
\begin{proof}[Proof of Theorem \ref{thm:main2}]
 By the previous lemma and Theorem \ref{thm_hormander} we have that, for every $f\in L^2_{\delta}$ and $\lambda \in \mR_+ \setminus \Lambda$,  $\partial^\alpha R(z) f \to  \partial^\alpha R(\lambda \pm i0) f$ converges weakly in $L^2_{-\delta}$ as $\Im(z) \to 0$.  Now, let $f\in L^2_\delta$. By Theorem  \ref{thm:main1}, we have that if $\Im(z) \neq 0$, $\partial^\alpha R(z) f$ is bounded in $L^2_{-\delta}$, and the right hand side of \eqref{eq:main_estimate2} is independent of $\Im(z)$. Since bounded sets are precompact in the weak topology, this implies that there is a positive sequence   $(\eps_j)_{j \geq 1}$, $\eps_j \rightarrow 0$,  such that 
 \[
 \partial^\alpha R(\lambda  \pm i\eps_j )f \rightharpoonup \partial^\alpha R(\lambda \pm i0)f\quad  \text{weakly in} \; L^2_{-\delta}.
\]
 As a consequence, $\norm{\partial^\alpha R(\lambda \pm i0)f}_{L^2_{-\delta}} \leq \liminf_{j \to \infty} \, \norm{\partial^\alpha R(\lambda  \pm i\eps_j )f}_{L^2_{-\delta}}$. Theorem  \ref{thm:main1}  yields directly the estimate \eqref{eq:main_thm2}.
\end{proof}

\appendix

\section{} \label{sec_appendix}

We now show that $H$ is self-adjoint with form domain $H^1$. We define the sesquilinear form $q_H(u,v) := (u,Hv)$ for $u \in H^1$ and $v \in  C^\infty_c$. Under these assumptions, by integration by parts one can show that
\begin{equation} \label{eq:self_adjoint}
 q_H(u,v) = (Du, D v) + (Du,Wv) + (Wu,Dv) + (u,V v).
 \end{equation}
Then, since $W \in L^\infty(\mR^n, \mR^n) $,  $q_H(u,v)$ makes sense for all $u,v\in H^1$.
\begin{Proposition} \label{prop_self_adjoint}
Let  $W \in L^\infty(\mR^n, \mR^n) $ and  $V \in L^\infty(\mR^n, \mR) $. Then there is a unique self-adjoint operator $H$ with form domain $H^1$, such that \eqref{eq:self_adjoint} holds for all $u,v \in H^1$.
\end{Proposition}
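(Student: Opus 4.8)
The plan is to realize $H$ via the first representation theorem for sesquilinear forms (the KLMN-type result, as in Reed--Simon or Kato). First I would verify that the form
\[
q_H(u,v) = (Du,Dv) + (Du,Wv) + (Wu,Dv) + (u,Vv)
\]
is well defined, symmetric, and closed on $H^1$. Symmetry is immediate from $W$ being real-valued and $V$ being real-valued: $(Du,Wv) = (Wu,Dv)$ in conjugate-symmetric form, and $(u,Vv) = \overline{(v,Vu)}$. For the form bounds, I would split $q_H = q_0 + q_1$ where $q_0(u,v) = (Du,Dv)$ is the form of $-\Delta$ with form domain $H^1$, and estimate the perturbation $q_1$. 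The cross terms are controlled by
\[
\abs{(Du,Wv)} \leq \norm{W}_{L^\infty} \norm{Du}\,\norm{v} \leq \eps \norm{Du}^2 + C_\eps \norm{v}^2,
\]
using Young's inequality with any $\eps > 0$; the potential term satisfies $\abs{(u,Vv)} \leq \norm{V}_{L^\infty}\norm{u}\,\norm{v}$. Hence $q_1$ is \emph{infinitesimally form-bounded} with respect to $q_0$: for every $\eps > 0$ there is $C_\eps$ with $\abs{q_1(u,u)} \leq \eps\, q_0(u,u) + C_\eps \norm{u}^2$.

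Next I would invoke the KLMN theorem: since $q_0$ is a nonnegative closed form with form domain $H^1$, and $q_1$ is a symmetric form that is form-bounded relative to $q_0$ with relative bound $<1$, the sum $q_H = q_0 + q_1$ is closed and bounded below on the same form domain $H^1$, and there is a unique self-adjoint operator $H$ (bounded below) associated to $q_H$, i.e.\ with $q_H(u,v) = (u, Hv)$ for $v$ in the operator domain and $u \in H^1$. Uniqueness of the operator associated to a given closed semibounded form is part of the representation theorem. One should note that the statement defines $q_H(u,v) = (u,Hv)$ with a particular ordering, so a small check is that the operator produced by the representation theorem matches this convention; since $q_H$ is symmetric this is only a matter of bookkeeping (possibly replacing $q_H$ by its conjugate, which defines the same form).

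The only genuine point requiring care—and what I would flag as the main obstacle, though it is mild—is making sure the identity \eqref{eq:self_adjoint} for $q_H(u,v)$ extends from the definition $q_H(u,v) = (u,Hv)$ with $v \in C^\infty_c$ to all $u,v \in H^1$, i.e.\ that the right-hand side of \eqref{eq:self_adjoint} is indeed the closed form whose associated operator is $H$. This is handled by a density argument: both sides of \eqref{eq:self_adjoint} are continuous in $v$ with respect to the $H^1$ norm (for fixed $u \in H^1$), using $\norm{W}_{L^\infty}, \norm{V}_{L^\infty} < \infty$, and $C^\infty_c$ is dense in $H^1$, so the identity propagates. Then closedness of the right-hand-side form on $H^1$ follows because its square-norm is equivalent to $\norm{u}_{H^1}^2$: from the infinitesimal form bound one gets $q_0(u,u) \leq 2 q_H(u,u) + C\norm{u}^2$ and $q_H(u,u) \leq 2 q_0(u,u) + C\norm{u}^2$, so the form norm $q_H(u,u) + \norm{u}^2$ is equivalent to the complete norm $\norm{u}_{H^1}^2$, giving completeness. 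With closedness and semiboundedness in hand, the representation theorem delivers the unique self-adjoint $H$ with form domain $H^1$, completing the proof.
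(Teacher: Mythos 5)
Your proof is correct and takes essentially the same approach as the paper: both invoke the KLMN theorem (Reed--Simon, Theorem X.17) and verify via Young's inequality that the first-order and potential terms form a symmetric perturbation of the Dirichlet form with relative bound zero. The paper's proof is simply terser, leaving the routine closedness and density checks to the reader.
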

\begin{proof}
The proof follows from \cite[Theorem X.17]{reedsimon}. Thanks to this theorem, it is enough to show that the form $q_H$ is relatively bounded with respect to the form associated to the negative Laplacian, that is $q_{-\Delta}(u,v) = (Du, D v)$. This is immediate by Young's inequality. If $u\in H^1$, for every  $\eps >0$ one has 
\[ |(Du,Wu) + (Wu,Du) + (u,V u)| \le \eps \norm{\nabla u}^2 + (\eps^{-1}\norm{W}_{L^\infty}^2 +\norm{V}_{L^\infty}) \norm{u}^2 ,\]
 so actually the relative bound is zero.
\end{proof}

We now give the proof  of a couple of auxiliary results used in the paper.

\begin{proof}[Proof of Lemma \ref{lemma_properties_wheight0}]
We have that
\begin{align*}
\tilde \varphi_0(r) &= \br{r}, \qquad \qquad \qquad \quad \ 
\tilde \varphi_0'(r) = r \br{r}^{-1},\\
\psi_0'(r) &= (2\delta-1) r \br{r}^{-2\delta-1}, \quad
\psi_0''(r) = (2\delta-1)  \br{r}^{-2\delta-3}\left(1-2\delta r^2 \right ).
\end{align*}
First, we combine the first and third terms on the right hand side of \eqref{eq:hess1} and show that
\begin{equation} \label{eq:hess_est}
 \left( e^{\tau \psi} (\tilde \phi_0 \psi_0''(r) + 2 \tilde \phi_0'(r) \psi_0'(r)) \partial_r u,\partial_r u \right ) > \alpha ( e^{\tau \psi} \br{r}^{-2\delta} \partial_r u,  \partial_r u), 
 \end{equation}
for $\alpha = (2-2\delta)(2\delta-1)$. This follows from the fact that
\begin{align*}
   \tilde \varphi_0(r) \psi_0''(r)  + 2\tilde \varphi_0'(r) \psi_0'(r)  &= (2\delta-1) \br{r}^{-2\delta-2}(1+(2-2\delta)r^2) \\
   &> (2-2\delta)(2\delta-1)  \br{r}^{-2\delta},   
 \end{align*}
since $0<2-2\delta<1$. 
Then, using \eqref{eq:hess_est} in \eqref{eq:hess1} we obtain that
\begin{multline} \label{eq:hess3}
 (\phi''(x) \nabla u,\nabla u) \geq  \alpha \tau ( e^{\tau \psi}  \br{r}^{-2\delta} \partial_r u,  \partial_r u) \\ 
 + \tau^2 (e^{\tau \psi} \tilde \phi_0 (\psi_0'(r))^2 \partial_r u,  \partial_r u)  + \tau ( e^{\tau \psi} \tilde \phi_0 \frac{\psi_0'}{r} \nabla^{\perp} u,\nabla^{\perp} u)  .
\end{multline}
Therefore, using that $\tilde \phi_0 \frac{\psi_0'}{r} =  (2\delta-1) \br{r}^{-2\delta}$, and that $\alpha< (2\delta-1)$ we get
\begin{multline} \label{eq:hess4}
 (\phi''(x) \nabla u,\nabla u) \geq  \alpha \tau ( e^{\tau \psi}  \br{r}^{-2\delta} \nabla u,  \nabla u) \\ 
 + \tau^2 (e^{\tau \psi} \tilde \phi_0 (\psi_0'(r))^2 \partial_r u,  \partial_r u).
\end{multline}
 This yields \eqref{eq:hess2}. \eqref{eq:wheight_derivatives} follows by direct computation.
\end{proof} 
\begin{proof}[Proof of Lemma \ref{lemma_shift}]
The proof is similar to \cite[Lemma 3.2]{Vodev}. We start by noticing that \eqref{eq:G_weights} is of Carleman type. Indeed, we define $\widetilde \psi := \tau^{-1}k \log w +\frac{1}{2}\psi$, so that we have $ w^k e^{\frac{1}{2}\tau \psi}=e^{\tau\widetilde \psi} $.
By the conditions assumed on $\phi$ and since $\tau\geq 1$, we have that $|\nabla \widetilde \psi|, |\Delta \widetilde \psi| \le C:=C(k, \psi)$, where the latest constant is independent of $h$ and $\tau$ and we assume it to be greater than $1$. By direct computation we get
\begin{align*}
w^k(G_{h,\tau}-i)w^{-k}v &= (e^{\tau\widetilde \psi}h^2\,(P+2W^L\cdot D + D \cdot W^L +V^L)\,e^{-\tau\widetilde \psi}- i)\, v \\
&= (h^2P-i)v + Q_{h,\tau} v,
\end{align*}
where $Q_{h,\tau}$ is a semiclassical first order  operator defined by
\begin{equation}\label{first_order_op}
\begin{aligned}
 Q_{h,\tau} v &= h^2(- \tau^2 | \nabla \widetilde{\psi} |^2 + \tau\,\Delta \widetilde \psi  +2i \tau W^L\cdot \nabla \widetilde\psi - i\nabla\cdot W^L + V^L)v \\
 &\quad+ 2\, h(\tau \nabla \widetilde \psi -i W^L) \cdot h\nabla v.
\end{aligned}
\end{equation}
Using the Fourier transform, one can easily check that
\begin{equation*}\label{fourier_estimate}
\norm{(h^2P\, -i)^{-1}v }_{H^{1+a}_{scl}}\leq  2\norm{ v }_{H^{-1+a}_{scl}}.
\end{equation*}
We also consider the resolvent identity
\[
(h^2 P -i + Q_{h,\tau}  )^{-1}=(h^2 P -i)^{-1} +(h^2 P -i)^{-1} Q_{h,\tau}  (h^2 P -i + Q_{h,\tau} )^{-1},
\]
which allows us to show that 
\begin{equation}\label{sem_ineq}
\begin{aligned}
  \norm{ (h^2 P -i + Q_{h,\tau}  )^{-1}v}_{H_{scl}^{1+a}} &\leq 2\norm{ v }_{H^{-1+a}_{scl}}  + 2\norm{  Q_{h,\tau}  }_{ \mathcal{L}(H_{scl}^{1+a},\, H_{scl}^{-1+a})}  \\
  & \quad  \quad \quad  \quad \quad \quad \quad \times \norm{ (h^2 P -i + Q_{h,\tau}  )^{-1}v}_{H_{scl}^{1+a}}.
\end{aligned}
\end{equation}
Using \eqref{first_order_op} and that $\nabla \cdot W^L$  we obtain
\[
\norm{  Q_{h,\tau}  }_{ \mathcal{L}(H_{scl}^{1+a},\, H_{scl}^{-1+a})}\leq \frac{1}{4},
\]
whenever 
\[
h< \tau^{-1} \min\left\{1/4, (18C^2(1+\norm{ W^L }_{L^\infty} + \norm{ V^L }_{L^\infty} + a\norm{ \nabla \cdot W^L }_{L^\infty}))^{-1}\right\}.
\]
Considering cases $a=0$ and $a=1$ separately, the  estimate above immediately implies the desired result by absorbing the second term on the right into the left hand side of \eqref{sem_ineq}.
\end{proof}

\bibliographystyle{alpha}

\end{document}